\newtheorem{thm}{Theorem}[section]
\newtheorem{theorem}[thm]{Theorem}
\newtheorem{lemma}[thm]{Lemma}
\newtheorem{proposition}[thm]{Proposition}
\theoremstyle{definition}
\numberwithin{equation}{section}
\newcommand{\N}{\mathbb N}
 \DeclareMathOperator{\ord}{ord}
 \DeclareMathOperator{\supp}{supp}
\renewcommand{\t}{\, | \,}
\begin{document}

\title[Sets of minimal distances and  characterizations of class groups]{Sets of minimal distances and \\ characterizations of class groups of Krull monoids }

\author{Qinghai Zhong}
\address{Qinghai Zhong\\
University of Graz, NAWI Graz \\
Institute for Mathematics and Scientific Computing \\
Heinrichstra{\ss}e 36\\
8010 Graz, Austria}
\email{qinghai.zhong@uni-graz.at}
\urladdr{http://qinghai-zhong.weebly.com}

\subjclass[2010]{11B30, 11R27, 13A05, 13F05, 20M13}

\keywords{Krull monoids,    class groups,   arithmetical characterizations, sets of lengths, zero-sum sequences, Davenport constant}

\thanks{This work was supported by the Austrian Science Fund FWF, Project Number P28864-N35.}

\begin{abstract}
Let $H$ be a Krull monoid with finite class group $G$ such that every class contains a prime divisor. Then every non-unit $a \in H$ can be written as a finite product of atoms, say $a=u_1 \cdot \ldots \cdot u_k$. The set $\mathsf L (a)$ of all possible factorization lengths $k$ is called the set of lengths of $a$. There is a constant $M \in \N$ such that all sets of lengths are almost arithmetical multiprogressions with bound $M$ and with difference $d \in \Delta^* (H)$, where $\Delta^* (H)$ denotes the set of minimal distances of $H$.
We study the structure of  $\Delta^* (H)$ and establish a characterization when $\Delta^*(H)$ is an interval.

The system $\mathcal L (H) = \{ \mathsf L (a) \mid a \in H \}$ of all sets of lengths depends only on the class group $G$, and a standing conjecture states that conversely the system $\mathcal L (H)$ is characteristic for the class group. We confirm this  conjecture (among others) if the class group is isomorphic to $C_n^r$ with $r,n \in \N$ and $\Delta^*(H)$ is not an interval.
\end{abstract}

\maketitle

\medskip
\section{Introduction and Main Results} \label{1}
\medskip

Let $H$ be a Krull monoid with finite class group $G$ such that every class contains a prime divisor (holomorphy rings in global fields are such Krull monoids and more examples will be given later).
 Then every non-unit of $H$ has a factorization as a finite product of atoms (or irreducible elements), and all these factorizations are unique (i.e., $H$ is factorial) if and only if $G$ is trivial. Otherwise, there are elements having factorizations which differ not only up to associates and up to the order of the factors. These phenomena are described by arithmetical invariants such as sets of lengths and sets of distances. For an overview of recent developments in Factorization Theory we refer to \cite{C-F-G-O16}.

We  recall some basic concepts and then we formulate the main results of the present paper.
For a finite nonempty set $L = \{m_1, \ldots, m_k\}$ of positive integers with $m_1 < \ldots < m_k$, we denote by $\Delta (L) = \{m_i-m_{i-1} \mid i \in [2,k] \}$ the set of distances of $L$.  If a non-unit $a \in H$ has a factorization $a = u_1 \cdot \ldots \cdot u_k$ into atoms $u_1, \ldots, u_k$, then $k$ is called the length of the factorization, and the set $\mathsf L (a)$ of all possible factorization lengths $k$ is called the set of lengths of $a$.
Since $H$ is  Krull, every non-unit has a factorization into atoms and all sets of lengths are finite. Furthermore, all sets of lengths $\mathsf L (a)$ are singletons if and only if $|G| \le 2$. Suppose that $|G| \ge 3$. Then there is an element $a \in H$ with $|\mathsf L (a)|>1$, and since the $n$-fold sumset $\mathsf L (a) + \ldots + \mathsf L (a)$ is contained in $\mathsf L (a^n)$, it follows that  $|\mathsf L (a^n)| > n$ for every $n \in \N$.
Therefore, the system  $\mathcal L(H)=\{\mathsf L(a)\mid a\in H \}$  of all sets of lengths of $H$ consists of infinitely many finite subsets of the integers, and there are arbitrarily large sets of lengths.

The set of distances $\Delta (H)$ is the union of all sets $\Delta (L)$ over all  $L \in \mathcal L (H)$. Since the class group is finite, $\Delta (H)$ is finite, and since every class contains a prime divisor, $\Delta (H)$ is a finite interval with $\min \Delta (H)=1$ (\cite{Ge-Yu12b}; the maximum of $\Delta (H)$ is unknown in general, see \cite{Ge-Gr-Sc11a,Ge-Zh15b}).
The set of minimal distances $\Delta^* (H)$ is a crucial subset of $\Delta (H)$, defined as
\[
\Delta^* (H) = \{ \min \Delta (S) \mid S \subset H \ \text{is a divisor-closed submonoid with} \ \Delta (S) \ne \emptyset \} \,.
\]
It has been studied by Chapman, Geroldinger, Hamidoune, Schmid et al. (see e.g., \cite[Chapter 6.8]{Ge-HK06a}, \cite{Ge-Ha02, Sc05d,Ch-Sc-Sm08b}),  and the original interest in $\Delta^* (H)$ stemmed from its occurrence in the Structure Theorem for Sets of Lengths. For convenience of the reader we formulate the Structure Theorem    and recall that the given description is best possible (\cite[Chapter 4.7]{Ge-HK06a}, \cite{Sc09a}).

\medskip
\noindent
{\bf Theorem A.} {\it Let $H$ be a Krull monoid with finite class group. Then there is  a constant $M \in \N$ such that the set of lengths $\mathsf L (a)$ of any non-unit $a\in H$ is an {\rm AAMP} $($almost arithmetical multiprogression$)$ with difference $d \in \Delta^* (H)$ and bound $M$.}

\medskip
The last couple of years have seen a renewed interest in $\Delta^* (H)$ partly motivated by the Characterization Problem (which will be discussed below). Among others the maximum of $\Delta^* (H)$ has been determined (we have $\max \Delta^*(H)=\max\{\mathsf r(G)-1, \exp(G)-2\}$ by \cite{Ge-Zh16a}), and a better understanding of $\Delta^* (H)$ opened the door to progress in a variety of directions (e.g, \cite{Ge-Sc16b}).

Whereas the set $\Delta (H)$ of all distances is an interval, the structure of $\Delta^* (H)$ is much more involved.
A simple example shows that the interval $[1, \mathsf r (G)-1]$ is contained in $\Delta^* (H)$ (Lemma \ref{3.2}) and thus $\Delta^* (H)$ is an interval if $\mathsf r (G) \ge \exp (G)-1$.
In the present paper we further study the structure of $\Delta^* (H)$, which allows us to establish a characterization when $\Delta^* (H)$ is an interval. Here is our first main result.


\medskip
\begin{theorem} \label{1.1}
Let $H$ be a Krull monoid with finite class group $G$ such that every class contains a prime divisor.
Suppose that $|G|\ge 3$, $\exp (G)=n$, $\mathsf r (G)=r$, and let $k\in \N$ be maximal such that $G$ has a subgroup isomorphic to $C_{n}^k$. Then \begin{align*}&[1,r-1]\cup\{\max\{1,\lfloor\frac{n}{2}\rfloor-1\}\}\cup [\max\{1,n-k-1\},n-2]\\
\subset \  \Delta^*(H) \ \subset \ &[1,\max\{r-1, \lfloor\frac{n}{2}\rfloor-1 \}]\cup [\max\{1,n-k-1\},n-2]\,.
\end{align*} In particular, the following holds{\rm \,:}
\begin{enumerate}
\item  If $r\ge \left\lfloor\frac{n}{2}\right\rfloor-1$, then \[
    \Delta^*(H)=[1,\max\{r-1, \lfloor\frac{n}{2}\rfloor-1 \}]\cup [\max\{1,n-k-1\},n-2] \,.
    \]

\item The following statements are equivalent{\rm \,:}
\begin{enumerate}
\item[(a)] $\Delta^*(H)$ is an interval.

\item[(b)] $\max\{1,n-k-2\}\in \Delta^*(H)$.

\item[(c)] $n-k-2\le \max\{r-1, \lfloor\frac{n}{2}\rfloor-1\}$.

\item[(d)] $r+k\ge n-1$ or \big( $r+k= n-2$ and $G\cong C_{2r+2}^{r}$\big).
\end{enumerate}

\end{enumerate}
\end{theorem}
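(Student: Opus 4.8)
The plan is to transfer the whole question to the monoid $\mathcal B (G)$ of zero-sum sequences over $G$: since $H$ admits a transfer homomorphism onto $\mathcal B (G)$ we have $\Delta^* (H) = \Delta^* (\mathcal B (G))$, and since the divisor-closed submonoids of $\mathcal B (G)$ are precisely the submonoids $\mathcal B (G_0)$ with $G_0 \subseteq G$, the problem becomes that of describing the set $\{\min \Delta (\mathcal B (G_0)) \mid G_0 \subseteq G,\ \Delta (\mathcal B (G_0)) \ne \emptyset\}$. It thus suffices to prove the two displayed inclusions; parts (1) and (2) will then follow by elementary bookkeeping.

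For the lower inclusion, $[1, r-1] \subseteq \Delta^* (H)$ is Lemma \ref{3.2}. The point $\max \{1, \lfloor n/2 \rfloor - 1\}$ is realized by an explicit subset of a cyclic subgroup $C_n \le G$: for $n \le 3$ it equals $1$, and for $n \ge 4$ one can take, when $n$ is even, $G_0 = \{g, -g, \tfrac n2 g\}$ with $\ord (g) = n$ — all of whose atoms have length $\equiv 2 \pmod{\tfrac n2 - 1}$, whence every distance is divisible by $\tfrac n2 - 1$ while $\tfrac n2 - 1$ itself occurs — together with a concrete two- or three-element set when $n$ is odd. For the interval $[\max \{1, n-k-1\}, n-2]$ one exploits the subgroup $C_n^k \le G$: for each $j \in [1,k]$, fixing a basis $e_1, \dots, e_j$ of $C_n^j$ and putting $G_0 = \{e_1, \dots, e_j, -(e_1 + \dots + e_j)\}$ gives a monoid $\mathcal B (G_0)$ whose atoms are $e_1 \cdots e_j\,(-(e_1 + \dots + e_j))$, the sequences $e_i^{[n]}$, and $(-(e_1 + \dots + e_j))^{[n]}$, which forces $\min \Delta (\mathcal B (G_0)) = n - j - 1$ (with a minor modification in the degenerate cases $\gcd (j+1, n) > 1$); letting $j$ run over $[1,k]$ then sweeps out $[n-k-1, n-2]$.

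The upper inclusion is the substantial part. Let $G_0 \subseteq G$ with $\Delta (\mathcal B (G_0)) \ne \emptyset$, put $d = \min \Delta (\mathcal B (G_0))$ and $K = \langle G_0 \rangle$, and suppose $d > \max \{r-1, \lfloor n/2 \rfloor - 1\}$; I must show $d \in [n-k-1, n-2]$. Since $\mathcal B (G_0)$ is a divisor-closed submonoid of $\mathcal B (K)$ we have $d \in \Delta^* (\mathcal B (K))$, hence $d \le \max \Delta^* (\mathcal B (K)) = \max \{\mathsf r (K) - 1, \exp (K) - 2\}$; because $\mathsf r (K) - 1 \le r - 1 < d$, the maximum must be attained by $\exp (K) - 2$, so $\exp (K) \ge d + 2 \ge \lfloor n/2 \rfloor + 2 > n/2$, and as $\exp (K)$ divides $n$ this forces $\exp (K) = n$. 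Likewise $d \le \max \Delta^* (H) = n-2$ since $d > r-1$. What remains — and this is the crux of the whole theorem — is to prove that a subset $G_0$ with $\exp \langle G_0 \rangle = n$ and $\min \Delta (\mathcal B (G_0)) > \max \{\mathsf r \langle G_0 \rangle - 1, \lfloor n/2 \rfloor - 1\}$ must satisfy $\min \Delta (\mathcal B (G_0)) = n - \ell - 1$ for some $\ell \in \N$ with $C_n^\ell \le \langle G_0 \rangle$; since then $\ell \le k$, this gives $d \ge n-k-1$. I expect this structural dichotomy — in effect a classification of the subsets of $G$ whose minimal distance lies strictly above $\max \{r-1, \lfloor n/2 \rfloor - 1\}$ — to be the main obstacle; it calls for a detailed analysis of the atoms of $\mathcal B (G_0)$, of half-factorial subsets, of the lengths of the minimal zero-sum relations over $\langle G_0 \rangle$ and of the Davenport constant of $\langle G_0 \rangle$, refining the techniques that underlie the known formula for $\max \Delta^* (H)$.

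Parts (1) and (2) then follow from the two inclusions by routine case analysis. If $r \ge \lfloor n/2 \rfloor - 1$, then $\max \{r-1, \lfloor n/2 \rfloor - 1\}$ equals $r-1$, or $r$ when $r = \lfloor n/2 \rfloor - 1$, and — using that the lower inclusion already supplies the point $\lfloor n/2 \rfloor - 1$ — both displayed sets collapse to $[1, \max \{r-1, \lfloor n/2 \rfloor - 1\}] \cup [\max \{1, n-k-1\}, n-2]$, which is (1). For (2): $1 = \min \Delta (\mathcal B (G))$ and $n-2$ both lie in $\Delta^* (H)$, so if $\Delta^* (H)$ is an interval it equals $[1, n-2]$ and hence contains $\max \{1, n-k-2\}$; this is (a)$\Rightarrow$(b). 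The equivalence (b)$\Leftrightarrow$(c) comes from the upper inclusion — any element of $\Delta^* (H)$ lying in $[2, n-k-2]$ must be $\le \max \{r-1, \lfloor n/2 \rfloor - 1\}$ — together with (1) for the converse. For (c)$\Rightarrow$(a) one invokes (1) when $r \ge \lfloor n/2 \rfloor - 1$, since (c) says exactly that the two intervals in that description overlap; whereas if $r < \lfloor n/2 \rfloor - 1$ then $k \le r < \lfloor n/2 \rfloor$ makes both alternatives of (c) fail, so (a)--(d) are simultaneously false. Finally (c)$\Leftrightarrow$(d): (c) reads ``$r+k \ge n-1$, or $k \ge \lceil n/2 \rceil - 1$'', and in the second case $k \le r$ together with $r+k \le n-2 \le 2k$ forces $r = k$, hence $G \cong C_n^r$; then $2k+2 \ge n$ and $r+k = 2r \le n-2$ give $n = 2r+2$ and $r+k = n-2$, which is the second clause of (d) — the reverse implication being a direct check.
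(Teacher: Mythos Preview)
Your overall architecture is the paper's: transfer to $\mathcal B(G)$, establish the two inclusions, then derive (1) and (2). The lower inclusion and the bookkeeping for (1) and (2) are essentially correct, with minor imprecisions --- e.g.\ the atom list you give for $\{e_1,\dots,e_j,-(e_1+\cdots+e_j)\}$ is not complete, and in (a)$\Rightarrow$(b) the interval is $[1,\max\Delta^*(H)]$ rather than $[1,n-2]$ when $r-1>n-2$; but these are easily repaired and the paper simply cites known results here anyway.

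The genuine gap is the upper inclusion: you explicitly leave it unproved, writing that the structural dichotomy ``is the crux of the whole theorem'' and ``calls for a detailed analysis'' without supplying one. This is indeed where the paper's entire technical content lies, and the paper does \emph{not} proceed via the classification you propose (showing $d=n-\ell-1$ for some $\ell$ with $C_n^\ell\le\langle G_0\rangle$). Instead it uses the cross-number dichotomy: call $G_0$ an \emph{LCN-set} if $\mathsf k(A)\ge 1$ for every atom $A$ over $G_0$, and set $\mathsf m(G)=\max\{\min\Delta(G_0):G_0\text{ a non-half-factorial LCN-set}\}$. A prior result of Schmid (quoted as Proposition~\ref{2.3}.3) already gives
\[
\Delta^*(G)\subset\bigl[1,\max\{\mathsf m(G),\lfloor n/2\rfloor-1\}\bigr]\cup\bigl[\max\{1,n-k-1\},n-2\bigr]\,,
\]
so the upper inclusion reduces entirely to the single inequality $\mathsf m(G)\le\max\{r-1,\lfloor n/2\rfloor-1\}$ (Proposition~\ref{3.7}). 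All of Section~\ref{3} --- the reduction to minimal non-half-factorial LCN-sets $G_0$ with $|G_0|\ge r+2$ and $g\in\langle G_0\setminus\{g\}\rangle$ for every $g$, followed by the intricate case analyses of Lemmas~\ref{3.4} and~\ref{3.6} --- is devoted to this inequality. Your observation that $\exp\langle G_0\rangle=n$ is correct but does not feed into this machinery; and your proposed direct classification of large-$\min\Delta$ subsets asks for strictly more than is needed and is not what makes the argument go through. The LCN/non-LCN split, with Schmid's theorem disposing of the non-LCN side, is the missing idea.
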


\medskip
Thus, in particular, if $\mathsf r (G) \ge \left\lfloor\frac{\exp (G)}{2}\right\rfloor-1$, then $\Delta^* (H)$ is completely determined. However,  if $\mathsf r(G)$ is small with respect to $\lfloor\frac{\exp(G)}{2}\rfloor$, then the structure of $\Delta^*(H)$ remains open. The complexity of this case, even for cyclic groups, can be seen from a recent paper by Plagne and Schmid who studed $\Delta^* (H)$ in case of cyclic class groups (\cite{Pl-Sc16a}).

\smallskip
In order to present our second main result, we recall the Characterization Problem for  class groups.
The monoid $\mathcal B (G)$ of zero-sum sequences over $G$ is  a Krull monoid with class group isomorphic to $G$, every class contains a prime divisor, and the systems of sets of lengths of $H$ and that of $\mathcal B (G)$ coincide. Thus $\mathcal L (H) = \mathcal L \big(B (G) \big)$, and it is usual to set $\mathcal L (G) := \mathcal L \big( \mathcal B (G) \big)$. In particular, the system of sets of lengths of $H$ depends only on the class group $G$. The associated inverse question asks whether or not sets of lengths are characteristic for the class group.
More precisely, the Characterization Problem for class groups can be formulated as follows (for surveys and a detailed description of the background of this problem see \cite[Section 7.3]{Ge-HK06a}, \cite[page 42]{Ge-Ru09}, \cite{Sc09b,Ge16c}).

\smallskip
\begin{enumerate}
\item[]
Given two finite abelian groups $G$ and $G'$ with $|G|\ge 3$ such that $\mathcal L(G) = \mathcal L(G')$.
Does it follow that $G \cong G'$?
\end{enumerate}
\smallskip

The system  $\mathcal L (G)$ is studied with methods from Additive Combinatorics. In particular, zero-sum theoretical invariants (such as the Davenport constant or the cross number) and the associated inverse problems play a crucial role (surveys and detailed presentations of such results can be found in \cite{Ge-HK06a, Ge-Ru09, Gr13a}).
Most of these invariants are well-understood only in a very limited number of cases (e.g., for groups of rank two, the precise value of the Davenport constant $\mathsf D (G)$ is known and the associated inverse problem is solved; however, if $n$ is not a prime power and $r \ge 3$, then the precise value of the Davenport constant $\mathsf D (C_n^r)$ is unknown). Thus it is not surprising that most affirmative answers to the Characterization Problem so far have been restricted to those groups where we have a good understanding of the Davenport constant. These groups include elementary $2$-groups, cyclic groups, and groups of rank two (for recent progress we refer to  \cite{Ge-Sc16a}).

The first groups, for which the Characterization Problem was solved whereas the Davenport constant is unknown, are groups of the form $C_n^r$, where $r, n\in \N$ and $r\le \frac{n+2}{6}$ ( \cite{Ge-Zh16b}). Based on Theorem \ref{1.1} we extend these results and give an affirmative answer to the Characterization Problem for all groups $C_n^r$ for which  $\Delta^*(C_n^r)$ is not an interval.

\smallskip
\begin{theorem} \label{1.2}
Let $G$ and $G'$ be  finite abelian groups and let  $k, k' \in \N$ be maximal such that $G$ has a subgroup isomorphic to $C_{\exp(G)}^k$ and $G'$ has a subgroup isomorphic to $C_{\exp (G')}^{k'}$. Suppose    $\mathsf r(G)+k\le \exp(G)-2$, $G\not\cong C_{2\mathsf r(G)+2}^{\mathsf r(G)}$, and that $\mathcal L(G)=\mathcal L(G')$. Then $\exp(G)=\exp(G')$ and $k=k'$. In particular,
\begin{enumerate}
\item If $\mathsf r(G)\ge \left\lfloor\frac{\exp(G)}{2}\right\rfloor+1$, then $\mathsf r(G)=\mathsf r(G')$.
\item If $G\cong C_{\exp(G)}^{\mathsf r(G)}$, then $G\cong G'$.
\end{enumerate}
\end{theorem}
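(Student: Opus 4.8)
The plan is to show that, under the hypotheses on $G$, both $\exp(G)$ and $k$ are encoded in the abstract shape of the integer set $\Delta^*(G):=\Delta^*(\mathcal B(G))$, and then to recover $\exp(G')$ and $k'$ from the same set by the same recipe. Throughout write $n=\exp(G)$, $r=\mathsf r(G)$, $n'=\exp(G')$, $r'=\mathsf r(G')$. Since $\mathcal L(G)=\mathcal L(G')$, the sets of minimal distances coincide, $\Delta^*(G)=\Delta^*(G')=:D$, because $\Delta^*(\cdot)$ is determined by the system of sets of lengths (this is known; it rests on the sharp form of the Structure Theorem). Note that $r+k\le n-2$ together with $r,k\ge 1$ forces $n\ge 4$, so $|G|\ge 3$, Theorem \ref{1.1} applies to $G$, and $n-k-1\ge 2$.

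First I read off the top of $D$ from the data of $G$. Since $r+k\le n-2$ and $G\not\cong C_{2r+2}^r$, condition (d) of Theorem \ref{1.1}(2) fails, hence (a) fails and $D$ is not an interval. A short computation shows the hypotheses force $\max\{r-1,\lfloor n/2\rfloor-1\}<n-k-2$: one has $r-1\le n-k-3$ at once, and $r\ge k$ gives $2k\le n-2$, hence $k\le\lfloor n/2\rfloor-1$ and $\lfloor n/2\rfloor-1\le n-k-2$, with equality only when $n$ is even and $k=n/2-1$, which then forces $r=k$ and $G\cong C_{2r+2}^r$ --- excluded. Consequently the inclusions of Theorem \ref{1.1} pin down $\max D=n-2$, $[n-k-1,n-2]\subseteq D$, and $n-k-2\notin D$ (the last since $n-k-2$ lies strictly between the two blocks permitted by the upper inclusion). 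Hence $n=\max D+2$, and $k$ equals the length of the longest interval contained in $D$ with upper endpoint $\max D$ (its ``top run''): the inclusion forces this length to be $\ge k$, and $n-k-2\notin D$ forces it to be $\le k$.

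Now I run the identical recipe for $G'$. Since $D$ is not an interval, Theorem \ref{1.1}(2) applied to $G'$ shows (d) fails for $G'$, so $r'+k'\le n'-2$, and the exceptional possibility (the case $r'+k'=n'-2$ with $G'\cong C_{2r'+2}^{r'}$) is likewise excluded, since it would make $D$ an interval. Repeating the estimate of the previous paragraph verbatim for $G'$ gives $\max\{r'-1,\lfloor n'/2\rfloor-1\}<n'-k'-2$ (the sole obstruction, $n'$ even and $k'=n'/2-1$, again forces $G'\cong C_{2r'+2}^{r'}$). Hence $\max D=n'-2$, $[n'-k'-1,n'-2]\subseteq D$, $n'-k'-2\notin D$, so $n'=\max D+2$ and the top run of $D$ equals $k'$. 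Comparing the two descriptions of the same set $D$ yields $n=n'$ and $k=k'$, the main assertion. Part (1) follows too: if $r\ge\lfloor n/2\rfloor+1$, then Theorem \ref{1.1}(1) gives $D=[1,r-1]\cup[n-k-1,n-2]$; the case $r'<\lfloor n'/2\rfloor$ is impossible, since it would force $D\subseteq[1,\lfloor n'/2\rfloor-1]\cup[n'-k'-1,n'-2]$ and hence $k=k'\ge\lceil n/2\rceil-1$, against $k\le n-2-r\le\lceil n/2\rceil-3$; so Theorem \ref{1.1}(1) applies to $G'$ as well, $D=[1,r'-1]\cup[n'-k'-1,n'-2]$, and comparing the longest initial intervals of $D$ gives $r-1=r'-1$.

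For part (2), suppose $G\cong C_n^r$, so $k=r$; by the main assertion $\exp(G')=n$ and $G'$ has exactly $k'=r$ invariant factors equal to $n$, i.e.\ $G'\cong C_{d_1}\times\cdots\times C_{d_{r'-r}}\times C_n^r$ with all $d_i<n$. Thus $G\cong C_n^r$ embeds in $G'$, properly if $r'>r$; in that case $\mathsf D(G)<\mathsf D(G')$ (adjoin an element of $G'\setminus G$ to a zero-sum free sequence over $G$ of maximal length to get a longer one over $G'$). But the elasticity $\rho(G)=\sup\{\max L/\min L\mid L\in\mathcal L(G)\}$ is an invariant of $\mathcal L(G)$, so $\rho(G)=\rho(G')$, and $\rho(G)=\mathsf D(G)/2$ for $|G|\ge 3$; hence $\mathsf D(G)=\mathsf D(G')$, a contradiction. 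Therefore $r'=r$ and $G'\cong C_n^r\cong G$. The main obstacle throughout is the estimate underlying the second and third paragraphs: everything rests on Theorem \ref{1.1} determining the top of $\Delta^*$ exactly, which needs $\max\{r-1,\lfloor n/2\rfloor-1\}$ to sit strictly below $n-k-2$; the borderline case where these collide is precisely the excluded group $C_{2r+2}^r$, and one must check that the symmetric application to $G'$ --- about which nothing is assumed in advance --- still falls into the non-exceptional regime, which it does only because $\Delta^*$ is already known not to be an interval, a fact that feeds back through Theorem \ref{1.1}(2).
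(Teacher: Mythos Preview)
Your overall strategy---read off $n$ and $k$ from the ``top run'' of a distance set that is invariant under $\mathcal L(G)=\mathcal L(G')$, then feed this back through Theorem~\ref{1.1}---is exactly the paper's approach. The gap is in the very first step: you assert that $\mathcal L(G)=\mathcal L(G')$ implies $\Delta^*(G)=\Delta^*(G')$, appealing to ``the sharp form of the Structure Theorem.'' This is not established. The set $\Delta^*(\mathcal B(G))=\{\min\Delta(G_0)\mid G_0\subset G,\ \Delta(G_0)\ne\emptyset\}$ is defined via subsets of $G$, not via $\mathcal L(G)$; the Structure Theorem only tells you that every $d\in\Delta^*(G)$ occurs as a period of long AAMPs in $\mathcal L(G)$, which yields $\Delta^*(G)\subset\Delta_1(G)$ (Proposition~\ref{2.3}.1) but not the reverse inclusion, and certainly not that $\Delta^*(G)$ can be reconstructed from $\mathcal L(G)$ alone. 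Indeed, if this were known, the Characterization Problem would reduce immediately in many cases, and the auxiliary invariant $\Delta_1$ would be superfluous.

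The fix is precisely what the paper does: replace $\Delta^*$ by $\Delta_1$ throughout your argument. By definition $\Delta_1(G)$ depends only on $\mathcal L(G)$, so $\Delta_1(G)=\Delta_1(G')$ is automatic. Proposition~\ref{2.3}.3 together with Proposition~\ref{3.7} gives the same two-sided inclusions for $\Delta_1(G)$ that Theorem~\ref{1.1} gives for $\Delta^*(G)$, so your computation showing $\max\{r-1,\lfloor n/2\rfloor-1\}<n-k-2$ (and the symmetric one for $G'$) still yields $\max\Delta_1(G)=n-2$, $[n-k-1,n-2]\subset\Delta_1(G)$, and $n-k-2\notin\Delta_1(G)$, from which $n=n'$ and $k=k'$ follow exactly as you wrote. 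Your arguments for parts~(1) and~(2) then go through unchanged (for~(2), the paper simply cites that $\mathcal L(G)=\mathcal L(G')$ implies $\mathsf D(G)=\mathsf D(G')$; your derivation via the elasticity $\rho(G)=\mathsf D(G)/2$ is the standard proof of that fact). One minor point in part~(1): your case split should exclude $r'\le\lfloor n/2\rfloor$ rather than $r'<\lfloor n/2\rfloor$, but the same contradiction ($r-1\in D$ forces $r+k\ge n$) handles the boundary value $r'=\lfloor n/2\rfloor$ as well.
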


In Section \ref{2} we gather the required background both  on Krull monoids as well as on Additive Combinatorics as needed in the sequel.  In Section \ref{3} we study structural properties of (large) minimal non-half-factorial subsets of finite abelian groups. Finally the proofs of Theorem \ref{1.1} and \ref{1.2}   will be provided in Section \ref{4}.

\medskip
\section{Background on Krull monoids and their sets of minimal distances} \label{2}
\medskip

Our notation and terminology are consistent with \cite{Ge-HK06a, Ge-Ru09, Gr13a}.
 Let $\mathbb N$ denote the set of positive integers and $\N_0=\N\cup\{0\}$. For $a, b \in \mathbb Q$, we denote
by $[a, b ] = \{ x \in \mathbb Z \mid a \le x \le b\}$ the discrete, finite interval between $a$ and $b$. If $A, B \subset \mathbb Z$ are subsets of the integers, then $A+B = \{a+b \mid a \in A, b \in B\}$ denotes their {\it sumset}, and
  $\Delta (A)$ the {\it set of $($successive$)$ distances} of $A$ (that is, $d \in \Delta (A)$ if and only if $d = b-a$ with $a, b \in A$ distinct and $[a, b] \cap A = \{a, b\}$).

 By a {\it monoid}, we mean a commutative semigroup with identity that satisfies the cancellation laws.
  If $H$ is a monoid, then $H^{\times}$ denotes the unit group and $\mathcal A (H)$  the set of atoms (or irreducible elements) of $H$.
   A submonoid $S \subset H$ is called {\it divisor-closed} if $a \in S$, $b \in H$, and $b$ divides $a$ imply that $b \in S$.
A monoid $H$ is said to be
\begin{itemize}
\item {\it atomic} if every non-unit can be written as a finite product of atoms.

\item {\it factorial} if it is atomic and every atom is prime.

\item {\it half-factorial} if it is atomic and $|\mathsf L (a)|=1$ for each non-unit $a \in H$ (equivalently, $\Delta (H)=\emptyset$).

\end{itemize}
A monoid $F$ is factorial with $F^{\times} = \{1\}$ if and only if it is free abelian. If this holds, then the set of primes $P \subset F$ is a basis of $F$, we write $F = \mathcal F (P)$, and every $a \in F$ has a representation of the form
\[
a = \prod_{p \in P} p^{\mathsf v_p (a)} \quad \text{with} \ \mathsf v_p (a) \in \N_0 \quad \text{and} \quad \mathsf v_p (a) = 0 \ \text{for almost all} \ p \in P \,.
\]

A monoid homomorphism \ $\theta \colon H \to B$ is called a \ {\it
transfer homomorphism} \ if it has the following properties:
\begin{enumerate}

\item[{\bf (T\,1)\,}] $B = \theta(H) B^\times$ \ and \ $\theta
^{-1} (B^\times) = H^\times$.

\smallskip

\item[{\bf (T\,2)\,}] If $u \in H$, \ $b,\,c \in B$ \ and \ $\theta
(u) = bc$, then there exist \ $v,\,w \in H$ \ such that \ $u = vw$, \
$\theta (v) \simeq b$ \ and \ $\theta (w) \simeq c$.

\end{enumerate}
If $H$ and $B$ are atomic monoids and $\theta \colon H \to B$ is a transfer homomorphism, then  (see \cite[Chapter 3.2]{Ge-HK06a})
\[
\mathcal L (H) = \mathcal L ( B ), \quad \Delta (H) = \Delta (B), \quad \text{and} \quad
\Delta^* (H) = \Delta^* (B) \,.
\]

\medskip
\noindent{\bf Krull monoids.} A monoid $H$ is said to be a {\it Krull monoid} if it satisfies one of the following two equivalent conditions:
\begin{enumerate}
\item[(a)] There exists a monoid homomorphism $\varphi \colon H \to F$ into a free abelian monoid $F$ such that $a \t b$ in $H$ if and only if $\varphi (a) \t \varphi (b)$ in $F$.

\item[(b)] $H$ is completely integrally closed and $v$-noetherian.
\end{enumerate}
A detailed presentation of the theory of Krull monoids can be found in \cite{HK98, Ge-HK06a}. To recall some examples, note that an integral domain is a Krull domain if and only if its multiplicative monoid of nonzero elements is a Krull monoid. Thus Property (b) shows that every integrally closed noetherian domain is a Krull domain.
Rings of integers in algebraic number fields, holomorphy rings in algebraic function fields, and regular congruence monoids in these domains are Krull monoids with finite class group such that every class contains a prime divisor (\cite[Section 2.11 and Examples 7.4.2]{Ge-HK06a}). Monoid domains and power series domains that are Krull are discussed in \cite{Ki-Pa01, Ch11a}, and note that every class of a  Krull monoid domain contains a prime divisor. For monoids of modules that are Krull and their distribution of prime divisors, we refer the reader to \cite{Fa06a, Ba-Ge14b}.

\smallskip
Sets of lengths in Krull monoids can be studied in the monoid of zero-sum sequences over its class group. To recall the basic concepts, let
$G$ be an additive finite abelian group and $G_0 \subset G$ a subset.     An element $S = g_1 \cdot \ldots \cdot g_l \in \mathcal F (G_0)$ is called a {\it sequence} over $G_0$, $\sigma (S) = g_1+ \ldots+g_l$ denotes its sum, $\mathsf k (S)  = \sum_{i=1}^l \frac{1}{\ord (g_i)} \ \in \mathbb Q_{\ge 0}$ its {\it cross number} of $S$, $|S|=l$ its length, and
$\mathsf h(S)=\max \{\mathsf v_g(S)\mid g\in \supp(S)\}$ the maximal multiplicity of $S$.
Since the embedding
\[
\mathcal B (G_0) = \{S \in \mathcal F (G_0) \mid \sigma (S)=0 \} \hookrightarrow \mathcal F (G_0)
\]
satisfies Property (a) above, $\mathcal B (G_0)$ is a Krull monoid, called the {\it monoid of zero-sum sequences} over $G_0$.  Its significance for the study of general Krull monoids is summarized in the following lemma (see \cite[Theorem 3.4.10 and Proposition 4.3.13]{Ge-HK06a}).

\medskip
\begin{lemma} \label{2.1}
Let $H$ be a Krull monoid with finite class group $G$ such that every class contains a prime divisor. Then there is a transfer homomorphism $\theta \colon H \to \mathcal B (G)$. In particular, we have  $\mathcal L (H) = \mathcal L  \big( \mathcal B (G) \big)  $ and
$$
\Delta^* (H) = \Delta^* \big( \mathcal B (G) \big) = \big\{ \min \Delta \big( \mathcal B (G_0) \big) \mid G_0 \subset G \ \text{with} \ \Delta(\mathcal B (G_0))\neq \emptyset \big\} \,.
$$
\end{lemma}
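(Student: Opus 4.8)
The plan is to construct the transfer homomorphism $\theta$ explicitly from a divisor theory of $H$, then invoke the invariance of $\mathcal L$, $\Delta$ and $\Delta^*$ under transfer homomorphisms recalled above, and finally match the divisor-closed submonoids of $\mathcal B(G)$ with the submonoids $\mathcal B(G_0)$, $G_0\subset G$. First I would fix a divisor theory $\varphi\colon H\to\mathcal F(P)$, available since $H$ is Krull, so that $G\cong\mathsf q(\mathcal F(P))/\mathsf q(\varphi(H))$; write $[p]\in G$ for the class of $p\in P$, and recall the standard fact that $\varphi(H)=\{x\in\mathcal F(P)\mid [x]=0\}$ (the image of a divisor theory is a saturated submonoid). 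Let $\beta\colon\mathcal F(P)\to\mathcal F(G)$ be the homomorphism with $\beta(p)=[p]$ for $p\in P$, and put $\theta=\beta\circ\varphi$. If $\varphi(a)=p_1\cdot\ldots\cdot p_l$ for $a\in H$, then $\sigma(\theta(a))=[p_1]+\ldots+[p_l]=[\varphi(a)]=0$, so $\theta$ takes values in $\mathcal B(G)$. For (T\,1): given $S=g_1\cdot\ldots\cdot g_l\in\mathcal B(G)$, the hypothesis that every class contains a prime divisor lets us pick $p_i\in P$ with $[p_i]=g_i$; then $p_1\cdot\ldots\cdot p_l$ has class $\sigma(S)=0$, hence lies in $\varphi(H)$, so $S\in\theta(H)$, and since $\mathcal B(G)^\times=\{1\}$ this gives $\mathcal B(G)=\theta(H)\mathcal B(G)^\times$; moreover $\theta^{-1}(1)=\varphi^{-1}(1)=H^\times$.

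For (T\,2): if $u\in H$ and $\theta(u)=bc$ in $\mathcal B(G)$, write $\varphi(u)=p_1\cdot\ldots\cdot p_l$ with $[p_i]=g_i$ and $b=g_1\cdot\ldots\cdot g_m$, $c=g_{m+1}\cdot\ldots\cdot g_l$; since $\sigma(b)=0$ the divisor $p_1\cdot\ldots\cdot p_m$ has trivial class and so equals $\varphi(v)$ for some $v\in H$, and from $\varphi(v)\mid\varphi(u)$ together with the divisor-homomorphism property we obtain $v\mid u$, say $u=vw$ with $w\in H$; then $\varphi(w)=p_{m+1}\cdot\ldots\cdot p_l$, hence $\theta(v)=b$ and $\theta(w)=c$. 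Thus $\theta$ is a transfer homomorphism, and by the identities recalled above (for atomic monoids, which $H$ and $\mathcal B(G)$ are, being Krull) we get $\mathcal L(H)=\mathcal L(\mathcal B(G))$ and $\Delta^*(H)=\Delta^*(\mathcal B(G))$. It then remains to identify the divisor-closed submonoids of $\mathcal B(G)$ occurring in the definition of $\Delta^*$ with the monoids $\mathcal B(G_0)$, $G_0\subset G$.

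That each $\mathcal B(G_0)$ is a divisor-closed submonoid of $\mathcal B(G)$ is clear, since $C\mid B$ in $\mathcal B(G)$ forces $\supp(C)\subset\supp(B)\subset G_0$. Conversely, let $S\subset\mathcal B(G)$ be a divisor-closed submonoid and set $G_0=\bigcup_{B\in S}\supp(B)$, so $S\subset\mathcal B(G_0)$. For $B\in\mathcal B(G_0)$ with $B\ne 1$, write $B=V_1\cdot\ldots\cdot V_t$ with $V_i\in\mathcal A(\mathcal B(G_0))$; for each $g\in G_0$ choose $B_g\in S$ with $g\in\supp(B_g)$ and put $W=\prod_{g\in G_0}B_g\in S$. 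Since $\mathsf v_g(W)\ge 1$ for every $g\in G_0\supset\supp(V_i)$ and $\mathsf h(V_i)\ge 1$, we get $\mathsf v_g(V_i)\le\mathsf v_g\big(W^{\mathsf h(V_i)}\big)$ for all $g$, i.e.\ $V_i\mid W^{\mathsf h(V_i)}$ in $\mathcal F(G)$; as $V_i$ and $W^{\mathsf h(V_i)}$ both have sum $0$, the complementary divisor also has sum $0$, so $V_i\mid W^{\mathsf h(V_i)}$ already in $\mathcal B(G)$. Because $W^{\mathsf h(V_i)}\in S$ and $S$ is divisor-closed, $V_i\in S$, hence $B\in S$ and $S=\mathcal B(G_0)$. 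Plugging this correspondence into the definition of $\Delta^*(\mathcal B(G))$ yields the displayed description.

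The step I expect to require most care is the last paragraph — namely the bookkeeping showing that a zero-sum sequence dividing a suitable power of $W$ in the free monoid already divides it in $\mathcal B(G)$ — together with, in the construction of $\theta$, keeping at hand the description $\varphi(H)=\{x\mid[x]=0\}$, which is exactly the place where the hypothesis that every class contains a prime divisor is used (it makes $\beta\circ\varphi$ surjective onto $\mathcal B(G)$ rather than onto $\mathcal B(G_P)$ for a proper subset $G_P\subsetneq G$). None of this is deep, and indeed the whole statement can alternatively be assembled directly from \cite[Theorem 3.4.10 and Proposition 4.3.13]{Ge-HK06a}.
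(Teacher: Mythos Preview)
Your proposal is correct and follows exactly the standard route; the paper's own ``proof'' is simply the one-line citation \cite[Theorem 3.4.10 and Proposition 4.3.13]{Ge-HK06a}, whose content is precisely what you have written out (the explicit block homomorphism via a divisor theory, verification of (T\,1)--(T\,2), and the identification of divisor-closed submonoids of $\mathcal B(G)$ with the monoids $\mathcal B(G_0)$). You even note this yourself in your final sentence, so there is nothing to add.
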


Thus $\Delta^* (H)$ can be studied in an associated monoid of zero-sum sequences and can  be tackled by methods from Additive Combinatorics. The existence of a transfer homomorphism to a monoid of zero-sum sequences is not restricted to Krull monoids, but it holds true for so-called transfer Krull monoids and thus Theorem \ref{1.1} holds true for transfer Krull monoids over finite abelian groups. We refer to \cite{Ge16c} for a discussion of this concept and just mention one additional example.
Let $\mathcal O$ be a holomorphy ring in a global field $K$, $A$ a central simple algebra over $K$, and $H$ a classical maximal $\mathcal O$-order of $A$ such that every stably free left $R$-ideal is free. Then there is a transfer homomorphism from $H$ to the monoid of zero-sum sequences over a ray class group of $\mathcal O$ (\cite[Theorem 1.1]{Sm13a}).

\medskip
\noindent
{\bf Zero-Sum Theory.} Let $G$ be an additive finite abelian group and $G_0 \subset G$ a subset. We denote by $\langle G_0 \rangle \subset G$ the subgroup generated by $G_0$. Then $G\cong C_{n_1}\oplus \cdots \oplus C_{n_r}$, where $r=\mathsf{r}(G)\in \N_0$ is the \emph{rank} of $G$,  $n_r={\exp}(G)$ is the \emph{exponent} of $G$, and $1<n_1\mid \cdots \mid n_r\in \mathbb{N}$.
 It is traditional to set
$$
\mathcal A (G_0) := \mathcal A \big(\mathcal B (G_0)\big), \ \Delta (G_0):= \Delta \big( \mathcal B (G_0)\big) , \ \text{and} \   \Delta^* (G_0) :=  \Delta^* \big( \mathcal B (G_0)\big) \,.
$$
Clearly, the atoms of $\mathcal B (G_0)$ are precisely the minimal zero-sum sequences over $G_0$. The set $\mathcal A (G_0)$ is finite, and $\mathsf D (G_0) = \max \{ |S| \mid S \in \mathcal A (G_0)\}$ is the {\it Davenport constant} of $G_0$.
The set $G_0$ is called
\begin{itemize}
\item {\it half-factorial} \ if  the monoid $\mathcal B (G_0)$ is half-factorial (equivalently, $\Delta (G_0) = \emptyset$).

\item {\it non-half-factorial} \ if the monoid $\mathcal B (G_0)$ is not half-factorial (equivalently, $\Delta (G_0) \ne \emptyset$).

\item {\it minimal non-half-factorial} \ if $\Delta (G_0) \ne \emptyset$ but every proper subset is half-factorial.

\item an LCN-set if $\mathsf k (A) \ge 1$ for all $A \in \mathcal A (G_0)$.
\end{itemize}

The following simple result (\cite[Proposition 6.7.3]{Ge-HK06a}) will be used throughout the paper without further mention.

\medskip
\begin{lemma} \label{2.2}
Let $G$ be a finite abelian group and $G_0 \subset G$ a subset. Then
the following statements are equivalent{\rm \,:}
\begin{enumerate}
\item[(a)]
$G_0$ is half-factorial.
\smallskip

\item[(b)]
$\mathsf k (U) = 1$ for every $U \in \mathcal A (G_0)$.
\smallskip

\item[(c)]
$\mathsf L (B) = \{ \mathsf k (B) \}$ for every $B \in
      \mathcal B (G_0)$.
\end{enumerate}
\end{lemma}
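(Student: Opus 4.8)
The plan is to establish the cyclic chain of implications (a) $\Rightarrow$ (b) $\Rightarrow$ (c) $\Rightarrow$ (a). The one structural fact I would use throughout is that the cross number is an additive function on $\mathcal B (G_0)$: directly from its definition, $\mathsf k (ST) = \mathsf k (S) + \mathsf k (T)$ for all $S, T \in \mathcal B (G_0)$, so whenever $B = U_1 \cdot \ldots \cdot U_\ell$ is a factorization of $B$ into atoms one has $\mathsf k (B) = \mathsf k (U_1) + \ldots + \mathsf k (U_\ell)$.

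Two of the three implications are then immediate. For (b) $\Rightarrow$ (c): given $B \in \mathcal B (G_0)$, every factorization $B = U_1 \cdot \ldots \cdot U_\ell$ into atoms has length $\ell = \sum_{i=1}^{\ell} \mathsf k (U_i) = \mathsf k (B)$ by additivity and (b), so $\mathsf L (B) = \{ \mathsf k (B) \}$. For (c) $\Rightarrow$ (a): (c) in particular forces $|\mathsf L (B)| = 1$ for every $B \in \mathcal B (G_0)$, which is by definition the half-factoriality of $\mathcal B (G_0)$, i.e.\ of $G_0$.

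The only implication carrying any content is (a) $\Rightarrow$ (b). Here I would fix an atom $U \in \mathcal A (G_0)$, set $n = \exp (G)$ so that $\ord (g) \mid n$ for every $g \in \supp (U)$, and write $U = \prod_{g} g^{\mathsf v_g (U)}$. The sequence $U^n = \prod_{g} g^{n \mathsf v_g (U)}$ then admits two factorizations into atoms: the obvious one into $n$ copies of $U$, of length $n$; and the one obtained by splitting each $g^{n \mathsf v_g (U)}$ into $\frac{n \mathsf v_g (U)}{\ord (g)}$ copies of $g^{\ord (g)}$, which is an atom of cross number $1$, so that this second factorization has length $\sum_{g} \frac{n \mathsf v_g (U)}{\ord (g)} = n\, \mathsf k (U)$. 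Half-factoriality of $G_0$ forces these two lengths to coincide, giving $n = n\, \mathsf k (U)$ and hence $\mathsf k (U) = 1$. There is no genuine obstacle in the argument; the only point requiring a moment's thought is the passage to the power $U^n$, which is precisely the device that converts the rational number $\mathsf k (U)$ into an honest factorization length and thereby lets half-factoriality act on it.
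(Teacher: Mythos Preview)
Your proof is correct and is the standard argument. The paper does not give its own proof of this lemma but simply cites \cite[Proposition 6.7.3]{Ge-HK06a}, where essentially the same reasoning appears.
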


We define    $$
\begin{aligned}
\mathsf m (G) & = \max \{ \min \Delta (G_0) \mid G_0 \subset G \ \text{is an LCN-set with} \ \Delta (G_0) \ne \emptyset \} \,,
\end{aligned}
$$
and we denote by $\Delta_1 (G)$ the set of all $d \in \N$ with the following property:
\begin{itemize}
\item[]  For every $k \in \N$, there exists some $L \in \mathcal L (G)$  which is an AAP (an almost arithmetical progression) with difference $d$ and length $l \ge k$.
\end{itemize}
Thus, by definition, if $G'$ is a further finite abelian group such that $\mathcal L (G) = \mathcal L (G')$, then $\Delta_1 (G) = \Delta_1 (G')$.
The next proposition gathers the  properties of $\Delta^* (G)$ and of $\Delta_1 (G)$ which are needed in the sequel.

\medskip
\begin{proposition} \label{2.3}
Let $G$ be a finite abelian group with $|G|\ge 3$ and $\exp (G)=n$.
\begin{enumerate}
\item $\Delta^* (G) \subset \Delta_1 (G) \subset \{ d_1 \in \Delta (G) \mid d_1 \ \text{divides some } \ d \in \Delta^* (G)\}$. In particular, $\max \Delta^* (G) = \max \Delta_1 (G)$.

\smallskip
\item $\max \Delta^* (G) = \max \{ \exp (G)-2, \mathsf m (G) \} = \max \{ \exp (G)-2, \mathsf r (G)-1\}$. If $G$ is a $p$-group, then $\mathsf m (G)=\mathsf r (G)-1$.

\smallskip
\item If $k\in \N$ is maximal such that $G$ has a subgroup isomorphic to $C_{n}^k$, then
      \[
      \Delta^*(G)\subset \Delta_1(G)\subset [1,\max \{ \mathsf m(G), \lfloor \frac{n}{2}\rfloor-1 \}]  \cup [\max \{ 1,n-k-1 \} ,n-2]\,.
      \]
      and
      $$
      [1,\mathsf r(G)-1]  \cup \{\max\{1,\left\lfloor \frac{n}{2}\right\rfloor-1\}\}\cup [\max \{ 1,n-k-1\} ,n-2] \subset \Delta^*(G)\subset \Delta_1(G) \,.
      $$
\end{enumerate}
\end{proposition}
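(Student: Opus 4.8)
The plan is to prove the three parts in sequence, leaning on known results for the inner and outer bounds and reserving the genuine work for the rank-related refinements in part (3).

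\textbf{Part (2).} The equality $\max \Delta^*(G) = \max\{\exp(G)-2, \mathsf r(G)-1\}$ is exactly the theorem of Geroldinger--Zhong cited in the introduction (\cite{Ge-Zh16a}), so I would simply quote it. The identification with $\max\{\exp(G)-2, \mathsf m(G)\}$ follows once I show $\mathsf m(G) = \mathsf r(G)-1$ whenever $G$ is a $p$-group and, in general, $\mathsf r(G)-1 \le \mathsf m(G) \le \max\{\exp(G)-2, \mathsf r(G)-1\}$. The lower bound $\mathsf r(G)-1 \le \mathsf m(G)$ comes from the standard ``independent generators'' construction: if $(e_1,\dots,e_r)$ is a basis of a subgroup $C_{\exp(G)}^{\text{something}}$ of maximal rank, then $G_0 = \{e_1,\dots,e_r, -(e_1+\dots+e_r)\}$ is an LCN-set (each atom has cross number $\ge 1$ since all orders divide $\exp(G)$ and the atoms are short) with $\min\Delta(G_0) = r-1$; this is essentially Lemma \ref{3.2} referenced later in the paper. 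For the $p$-group case one argues that for LCN-sets the cross number forces $\min\Delta(\mathcal B(G_0))$ to be controlled by $\mathsf r(G)-1$; I expect this to be the content of a cited lemma from \cite{Ge-HK06a} or \cite{Ge-Zh16a} and would cite rather than reprove it.

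\textbf{Part (1).} The first inclusion $\Delta^*(G) \subset \Delta_1(G)$ is immediate from the Structure Theorem for Sets of Lengths (Theorem A): a divisor-closed submonoid realizing $d = \min\Delta(S)$ produces, via taking large powers, sets of lengths that are AAPs with difference $d$ and unbounded length, so $d \in \Delta_1(G)$. The second inclusion $\Delta_1(G) \subset \{d_1 \in \Delta(G) \mid d_1 \mid d \text{ for some } d \in \Delta^*(G)\}$ is a known refinement of the Structure Theorem (see \cite{Ge-HK06a}, Chapter 4.7, or \cite{Sc09a}); I would cite it. The ``in particular'' clause is then formal: $\max\Delta^*(G) \le \max\Delta_1(G)$ from the first inclusion, and $\max\Delta_1(G) \le \max\Delta^*(G)$ because every element of $\Delta_1(G)$ divides some element of $\Delta^*(G)$.

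\textbf{Part (3) --- the main work.} The outer bound asserts that $\Delta_1(G)$ avoids the ``gap'' $(\max\{\mathsf m(G), \lfloor n/2\rfloor-1\}, \, n-k-1)$. In view of part (1), it suffices to show that if $d \in \Delta^*(G)$ and $d > \lfloor n/2\rfloor - 1$ and $d > \mathsf m(G)$, then $d \ge n-k-1$ (and automatically $d \le n-2 = \max\Delta^*(G)$ when $\mathsf r(G) \le \exp(G)-1$, while the interval statement absorbs the other case). This is where I expect the real difficulty. The strategy: take a divisor-closed submonoid $\mathcal B(G_0)$ with $\min\Delta(G_0) = d$; replacing $G_0$ by a minimal non-half-factorial subset, we may assume $G_0$ is minimal non-half-factorial. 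Because $d > \mathsf m(G)$, the set $G_0$ is \emph{not} an LCN-set, so it contains an atom $U$ with $\mathsf k(U) < 1$; since orders divide $n$, a short atom has large cross number, so $U$ must be long, $|U|$ close to $\mathsf D(G_0)$. One then analyzes the structure of such large minimal non-half-factorial sets --- this is precisely the program announced for Section \ref{3} of the paper (``structural properties of (large) minimal non-half-factorial subsets''). The key structural fact to extract is that a minimal non-half-factorial $G_0$ with $\min\Delta(G_0) = d > \lfloor n/2 \rfloor -1$ forces $G_0$ to look (up to the usual normalizations) like $\{e_1, \dots, e_j, -(a_1 e_1 + \dots + a_j e_j)\}$ inside a subgroup $C_n^j$, with $d = n - j - 1$ in the extremal configuration and $d \ge n - k - 1$ once one uses that $j \le k$ by maximality of $k$. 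For the inner bound: $[1,\mathsf r(G)-1] \subset \Delta^*(G)$ is Lemma \ref{3.2} (the independent-generators construction, varying how many generators one uses); $\lfloor n/2\rfloor - 1 \in \Delta^*(G)$ comes from an explicit cyclic construction inside a $C_n$-subgroup (a set like $\{1, \lfloor n/2\rfloor, n - 2\lfloor n/2\rfloor + \text{adjustment}\}$, or the classical $\{g, -2g\}$-type example, realizing minimal distance $\lfloor n/2\rfloor-1$); and $[n-k-1, n-2] \subset \Delta^*(G)$ comes from the family of subsets of a $C_n^k$-subgroup of the form $\{e_1,\dots,e_i, -(e_1+\dots+e_{i-1}+ c\, e_i)\}$ with $i$ ranging over $[?, k+1]$, whose minimal distances sweep out $[n-k-1, n-2]$. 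I would present each of these three constructions explicitly, verify the cross-number/half-factoriality condition in each case (routine), and compute $\min\Delta$.

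The principal obstacle is the outer bound in part (3): ruling out minimal distances in the forbidden interval requires the classification of large minimal non-half-factorial sets, and that classification (which I would develop or cite from Section \ref{3}) is the technical heart. Everything else is either a citation or a direct construction.
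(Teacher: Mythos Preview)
Your mathematical picture is largely right, but you have misread the logical architecture of the paper, and this causes a genuine mismatch in Part~(3).

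In the paper, Proposition~\ref{2.3} is proved \emph{entirely by citation}: Part~(1) is \cite[Corollary~4.3.16]{Ge-HK06a}; Part~(2) is \cite[Theorem~1.1 and Proposition~3.2]{Ge-Zh16a}; and for Part~(3) the outer bound is quoted from \cite[Theorem~3.2]{Sc09c} while the inner bound is assembled from \cite[Propositions~4.1.2, 6.8.2 and Theorem~6.8.12]{Ge-HK06a}. So nothing in Proposition~\ref{2.3} is new work of this paper; in particular the ``gap'' statement $\Delta_1(G)\subset [1,\max\{\mathsf m(G),\lfloor n/2\rfloor-1\}]\cup[\max\{1,n-k-1\},n-2]$ is Schmid's result, not something proved here.

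Where you go wrong is in pointing to Section~\ref{3} for the outer bound. Your own strategy says: assume $d>\mathsf m(G)$, hence $G_0$ is \emph{not} an LCN-set, and then analyze the structure of such $G_0$. But every substantive lemma in Section~\ref{3} (Lemmas~\ref{3.4}, \ref{3.6}, and Proposition~\ref{3.7}) is explicitly about \emph{LCN}-sets; its purpose is to bound $\mathsf m(G)$ itself, yielding Proposition~\ref{3.7}: $\mathsf m(G)\le\max\{r-1,\lfloor n/2\rfloor-1\}$. That bound, combined with Proposition~\ref{2.3}.3, is what gives Theorem~\ref{1.1}. So Section~\ref{3} handles the LCN case, while the non-LCN case you are sketching is precisely the content of \cite{Sc09c}. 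The structural picture you describe --- $G_0$ looking like $\{e_1,\dots,e_j,-(a_1e_1+\dots+a_je_j)\}$ inside $C_n^j$ with $j\le k$ --- is indeed the heart of that argument, but it lives in Schmid's paper, not in Section~\ref{3}.

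Two smaller points. First, your reduction ``in view of Part~(1), it suffices to treat $d\in\Delta^*(G)$'' needs one more line: if $d\in\Delta_1(G)$ with $d>\lfloor n/2\rfloor-1$, then $d\mid d'$ for some $d'\in\Delta^*(G)$ with $d'\le n-2$ (in the relevant range), and $d\ge\lfloor n/2\rfloor$ forces $d'<2d$, hence $d=d'$. Second, your inner-bound constructions are correct in spirit and are acceptable substitutes for the citations the paper uses.
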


\begin{proof}
1. follows from \cite[Corollary 4.3.16]{Ge-HK06a} and 2. from \cite[Theorem 1.1 and Proposition 3.2]{Ge-Zh16a}.

3. In  \cite[Theorem 3.2]{Sc09c}, it is proved that $\Delta^* (G)$ is contained in the set given above.  The set $[1, \mathsf r (G)-1] \cup  [\max \{ 1,n-k-1\} ,n-2]$ is contained in $\Delta^* (G)$ by \cite[Propositions 4.1.2 and 6.8.2]{Ge-HK06a} and $\{\max\{1,\left\lfloor \frac{n}{2}\right\rfloor-1\}\}$ is contained in $\Delta^* (G)$ by $|G|\ge 3$ and \cite[Theorem  6.8.12]{Ge-HK06a}.
\end{proof}

\medskip
\section{Minimal non-half-factorial subsets of finite abelian groups} \label{3}
\medskip

\centerline{\it Throughout this section, let $G$ be an additive finite abelian group}
\centerline{\it  with $|G| \ge 3$, $\exp (G)=n$, and $\mathsf r (G)=r$.}

\medskip
The first three lemmas gather basic properties of $\Delta^*(G)$ and of non-half-factorial sets.

\medskip
\begin{lemma} \label{3.1}
Let  $G_0 \subset G$  be a subset.
\begin{enumerate}
\item For each $g \in G_0$,
      \[
      \begin{aligned}
      &\gcd \big( \{ \mathsf v_g (B) \mid B \in \mathcal B (G_0) \} \big)  =  \gcd \big( \{ \mathsf v_g (A) \mid A \in \mathcal A (G_0) \} \big)\\ =&
      \min  \big( \{ \mathsf v_g (A) \mid \mathsf v_g (A)>0,  A  \in \mathcal A (G_0) \} \big)  \\
       =&  \min \big( \{ \mathsf v_g (B) \mid \mathsf v_g (B) > 0, B \in \mathcal B (G_0) \} \big) \\ =& \min  \big( \{ k \in \N  \mid kg \in \langle G_0 \setminus \{g\} \rangle  \} \big)=\gcd \big(  \{ k \in \N  \mid kg \in \langle G_0 \setminus \{g\} \rangle  \} \big) \,.
      \end{aligned}
      \]
In particular, $\min  \big( \{ k \in \N  \mid kg \in \langle G_0 \setminus \{g\} \rangle  \} \big)$ divides $\ord (g)$.

\smallskip
\item Suppose that for each two distinct elements $h, h' \in G_0$ we have  $h\not\in \langle G_0\setminus \{h,\ h'\}\rangle $.
 Then for any atom $A$ with $\supp(A)\subsetneq G_0$ and any $h \in \supp (A)$, we have $\gcd (\mathsf v_h(A), \ord(h) )>1$.

\smallskip
\item If $G_0$ is minimal non-half-factorial, then there exists a
   minimal  non-half-factorial subset $G_0^* \subset G$ with $|G_0|=|G_0^*|$ and a transfer homomorphism $\theta \colon \mathcal B (G_0) \to \mathcal B (G_0^*)$ such that  the following  properties are satisfied{\rm \,:}
    \begin{enumerate}
    \item   For each $g \in G_0^*$, we have $g \in \langle G_0^* \setminus \{g\} \rangle$.

    \smallskip
    \item  For each $B \in \mathcal B (G_0)$, we have $\mathsf k (B) = \mathsf k \big( \theta (B) \big)$.
  \smallskip
  \item If $G_0^*$ has the property that  for each $h\in G_0^*$, $h\not\in \langle E\rangle$ for any $E\subsetneq G_0^*\setminus \{h\}$, then $G_0$ also has the property.
    \end{enumerate}
\end{enumerate}
\end{lemma}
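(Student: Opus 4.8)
The plan is to treat the three parts in order, since part (2) uses part (1) and part (3) can be bootstrapped from the combinatorics of parts (1)–(2). For part (1), I would prove the chain of equalities by establishing a cycle of inequalities (divisibilities). Write $d_1 = \gcd\{\mathsf v_g(B) \mid B \in \mathcal B(G_0)\}$, $d_2 = \gcd\{\mathsf v_g(A) \mid A \in \mathcal A(G_0)\}$, $d_3 = \min\{\mathsf v_g(A) \mid \mathsf v_g(A) > 0, A \in \mathcal A(G_0)\}$, $d_4 = \min\{\mathsf v_g(B) \mid \mathsf v_g(B) > 0, B \in \mathcal B(G_0)\}$, $d_5 = \min\{k \in \N \mid kg \in \langle G_0 \setminus \{g\}\rangle\}$, and $d_6 = \gcd\{k \in \N \mid kg \in \langle G_0 \setminus \{g\}\rangle\}$. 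The inclusion $\mathcal A(G_0) \subset \mathcal B(G_0)$ immediately gives $d_1 \mid d_2$ and $d_4 \le d_3$. Since every zero-sum sequence factors into atoms, every $\mathsf v_g(B)$ is a nonnegative integer combination of the $\mathsf v_g(A)$'s, whence $d_2 \mid d_1$; similarly $d_3 \le$ (length-of-$g$-part of any $B$) forces, together with the factorization, $d_4 \ge d_3$ once one picks $B$ attaining $d_4$ and extracts an atom containing $g$. For the link to $d_5, d_6$: if $kg \in \langle G_0 \setminus \{g\}\rangle$ then $kg = \sum \text{(elements of } G_0 \setminus \{g\})$ with integer coefficients, and by adding suitable multiples of relations one produces a zero-sum sequence $B$ with $\mathsf v_g(B) = k$, giving $d_1 \mid k$ for all such $k$, hence $d_1 \mid d_6 \mid d_5$; conversely, any $B$ with $\mathsf v_g(B) = k > 0$ witnesses $kg \in \langle \supp(B)\setminus\{g\}\rangle \subset \langle G_0 \setminus \{g\}\rangle$, so $d_5 \le d_4$. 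Chasing the cycle closes all of them to a common value, and the ``in particular'' follows since $\ord(g) \cdot g = 0 \in \langle G_0 \setminus \{g\}\rangle$, so $\ord(g)$ lies in the set whose gcd is $d_5 = d_6$.

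For part (2), suppose the separation hypothesis: for distinct $h, h' \in G_0$ one has $h \notin \langle G_0 \setminus \{h, h'\}\rangle$. Let $A$ be an atom with $\supp(A) \subsetneq G_0$ and fix $h \in \supp(A)$. Set $e = \mathsf v_h(A) > 0$. Since $A$ is a zero-sum sequence supported on $\supp(A)$, we have $eh = -\sum_{g \in \supp(A)\setminus\{h\}} \mathsf v_g(A)\, g \in \langle \supp(A)\setminus\{h\}\rangle$. Now $\supp(A)\setminus\{h\}$ is a subset of $G_0$ missing the element $h$ and, because $\supp(A) \subsetneq G_0$, also missing at least one further element $h' \in G_0$; thus $\supp(A)\setminus\{h\} \subset G_0 \setminus \{h, h'\}$. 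Applying part (1) to the ground set $G_0$, the quantity $d_5 = \min\{k : kh \in \langle G_0 \setminus \{h\}\rangle\}$ divides $e$; but the hypothesis says $h \notin \langle G_0 \setminus \{h, h'\}\rangle$, and I want to leverage this to force $d_5 > 1$. The point is that $d_5 = 1$ would mean $h \in \langle G_0 \setminus \{h\}\rangle$, which is weaker than the hypothesis negation, so I must instead argue more carefully at the level of $A$ itself: if $\gcd(e, \ord(h)) = 1$, pick $t$ with $te \equiv 1 \pmod{\ord(h)}$; then $h = teh - (\text{multiple of }\ord(h))h = t\cdot(eh) \in \langle \supp(A)\setminus\{h\}\rangle \subset \langle G_0\setminus\{h,h'\}\rangle$, contradicting the hypothesis. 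Hence $\gcd(\mathsf v_h(A), \ord(h)) > 1$, as claimed.

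For part (3), the strategy is the standard replacement trick: starting from a minimal non-half-factorial $G_0 = \{g_1, \ldots, g_\ell\}$, I would build $G_0^*$ inside a possibly larger group — concretely, inside $G \oplus \Z^{\ell}$ or an appropriate quotient — by replacing each $g_i$ that fails condition (a) (i.e. $g_i \notin \langle G_0 \setminus \{g_i\}\rangle$) by $g_i + e_i$ for a fresh independent coordinate, leaving the relation lattice otherwise intact; then project back down through the subgroup generated by the images so that the map on zero-sum sequences is a transfer homomorphism. One checks (T1) and (T2) from the construction (relations are preserved, atoms correspond to atoms), which gives that $G_0^*$ is again minimal non-half-factorial with $|G_0^*| = |G_0|$; property (b) on cross numbers holds because the orders of the replaced generators can be arranged to equal $\ord(g_i)$, so $\mathsf k$ is unchanged term by term; and property (a) holds by construction for the newly adjusted generators while being inherited for the untouched ones. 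Property (c) — transferring the ``no $h$ in the span of a proper subset of $G_0^* \setminus \{h\}$'' condition back to $G_0$ — follows because the transfer homomorphism and the concrete construction set up a bijection $\supp$-wise that is compatible with taking subgroups generated by subsets, so a spanning relation downstairs would pull back to one upstairs.

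The main obstacle I anticipate is part (3): making the construction of $G_0^*$ precise enough that all of (T1), (T2), (a), (b), (c) are simultaneously verifiable, in particular controlling the orders of the new generators so the cross number is preserved exactly (not merely up to rational multiples) while still keeping the ambient group finite — this typically requires a careful choice of the auxiliary cyclic factors and a quotient that kills exactly the unwanted relations and nothing more. Parts (1) and (2) are, by contrast, routine once the divisibility cycle in (1) is set up correctly; the only subtlety there is being scrupulous about the direction of each divisibility when passing between $\gcd$'s and $\min$'s.
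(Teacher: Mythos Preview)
Your arguments for parts (1) and (2) are sound. For (1), the cycle closes cleanly once you note that $\{k \in \Z : kg \in \langle G_0 \setminus \{g\}\rangle\}$ is a subgroup of $\Z$ (it contains $\ord(g)$), so its set of positive elements is $d\N$ for a single $d$, whence $d_5 = d_6$; combined with the observation that $k > 0$ lies in this set if and only if some $B \in \mathcal B(G_0)$ has $\mathsf v_g(B) = k$ (reduce any integer relation modulo the orders to get nonnegative exponents), all six quantities collapse. For (2) your second argument via $te \equiv 1 \pmod{\ord(h)}$ is exactly right; the detour through $d_5$ was unnecessary, as you noticed. The paper itself does not prove this lemma but cites \cite[Lemma 2.6]{Ge-Zh16a}, so there is no in-paper argument to compare against for these parts.

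For part (3), however, your construction has a genuine gap. The lemma requires $G_0^* \subset G$, the \emph{same} finite group; your plan passes to $G \oplus \Z^\ell$ and hopes to ``project back down'', but there is no reason the quotient lands back in $G$. More fundamentally, your replacement runs in the wrong direction: if $g_i \notin \langle G_0 \setminus \{g_i\}\rangle$, then adjoining a fresh independent coordinate to form $g_i + e_i$ makes that element \emph{more} independent of the rest, not less, so condition (a) gets harder, not easier. The construction in the cited reference does the opposite and stays inside $G$: one replaces each $g \in G_0$ by $d_g\, g$, where $d_g$ is precisely the common value computed in part (1). Then $G_0^* = \{d_g g : g \in G_0\} \subset G$, and the map $\theta(B) = \prod_{g \in G_0} (d_g g)^{\mathsf v_g(B)/d_g}$ is well defined because part (1) guarantees $d_g \mid \mathsf v_g(B)$ for every $B \in \mathcal B(G_0)$. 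Since $d_g \mid \ord(g)$ gives $\ord(d_g g) = \ord(g)/d_g$, the cross number is preserved termwise, yielding (b); property (a) comes from $d_g g \in \langle G_0 \setminus \{g\}\rangle$ by the very definition of $d_g$ (with an iteration if needed to pass from spans over $G_0$ to spans over $G_0^*$); and (c) follows by tracking subsets through the correspondence $g \leftrightarrow d_g g$. Your anticipated obstacle was real, but the fix is to multiply within $G$ rather than to extend $G$.
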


\begin{proof}
See \cite[Lemma 2.6]{Ge-Zh16a}.
\end{proof}

\medskip
\begin{lemma} \label{3.2}~

\begin{enumerate}
\item If $g \in G$ with $\ord (g) \ge 3$, then $\ord (g) - 2 \in \Delta^* (G)$. In particular, $n-2 \in \Delta^* (G)$.

\smallskip
\item If $r \ge 2$, then $[1, r -1] \subset \Delta^* (G)$.

\smallskip
\item Let $G_0 \subset G$ be a subset.
      \begin{enumerate}
      \item If there exists an $U \in \mathcal A (G_0)$ with $\mathsf k (U) < 1$, then $\min \Delta (G_0) \le \exp (G)-2$.

      \smallskip
      \item If $G_0$ is an \text{\rm LCN}-set, then $\min \Delta (G_0) \le |G_0|-2$.
      \end{enumerate}
\end{enumerate}
\end{lemma}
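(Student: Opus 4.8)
The plan is to treat the three items separately, deducing items (1) and (2) from Proposition~\ref{2.3} and giving short self-contained arguments for item (3). For part (1), I would pass to the cyclic subgroup $\langle g\rangle\cong C_{m}$, $m:=\ord(g)\ge 3$. Since $C_m$ has rank $1$ and the maximal $k$ with $C_m^k\subset C_m$ equals $1$, Proposition~\ref{2.3}(3) gives $[\max\{1,m-2\},m-2]=\{m-2\}\subset\Delta^*(C_m)$. As every subset of $\langle g\rangle$ is a subset of $G$ and the monoid $\mathcal B(G_0)$ does not depend on the ambient group, Lemma~\ref{2.1} shows $\Delta^*(\langle g\rangle)\subset\Delta^*(G)$, whence $m-2\in\Delta^*(G)$; taking $g$ of order $n$ (when $n\ge 3$) gives the ``in particular''. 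Alternatively one argues directly with $G_0=\{g,-g\}$: a zero-sum sequence over $G_0$ is $g^a(-g)^b$ with $a\equiv b\pmod m$, its atoms are $g(-g)$, $g^{m}$, $(-g)^{m}$, a short count of the factorizations of $g^a(-g)^b$ shows that every set of lengths is an arithmetic progression of difference $m-2$, and $\mathsf L(g^{m}(-g)^{m})=\{2,m\}$ is non-trivial, so $\min\Delta(\mathcal B(G_0))=m-2$. Part (2) is immediate, since Proposition~\ref{2.3}(3) already states $[1,\mathsf r(G)-1]\subset\Delta^*(G)$.

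For part (3a), fix $U\in\mathcal A(G_0)$ with $\mathsf k(U)<1$; note $|U|\ge 2$, because the only atom of length $1$ is $0^1$, which has cross number $1$. Set $N=\lcm\{\ord(g)\mid g\in\supp(U)\}$, so that $N\mid\exp(G)$, hence $N\le\exp(G)$. The sequence $B=U^{N}$ admits the factorization $U\cdot\ldots\cdot U$ of length $N$ and, since $\ord(g)\mid N\mathsf v_g(U)$ for every $g\in\supp(U)$, also the factorization $\prod_{g\in\supp(U)}(g^{\ord(g)})^{N\mathsf v_g(U)/\ord(g)}$ of length $N\mathsf k(U)\in\N$. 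Thus $\{N\mathsf k(U),\,N\}\subset\mathsf L(B)$ with $N\mathsf k(U)<N$, so $\min\Delta(G_0)\le N-N\mathsf k(U)=N(1-\mathsf k(U))$; and from $\mathsf k(U)=\sum_{g}\mathsf v_g(U)/\ord(g)\ge|U|/N\ge 2/N$ we conclude $N(1-\mathsf k(U))\le N-2\le\exp(G)-2$.

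For part (3b), if $G_0$ is an LCN-set with $\Delta(G_0)\ne\emptyset$, then Lemma~\ref{2.2} provides $U\in\mathcal A(G_0)$ with $\mathsf k(U)\ne 1$, hence $\mathsf k(U)>1$. Put $t=|\supp(U)|$ and $B=\prod_{g\in\supp(U)}g^{\ord(g)}$. Since $\mathsf v_g(U)<\ord(g)$ for each $g$, $U$ divides $B$ and $C:=BU^{-1}=\prod_{g}g^{\ord(g)-\mathsf v_g(U)}$ is a non-empty zero-sum sequence with $\mathsf k(C)=t-\mathsf k(U)<t-1$. Because $G_0$ is LCN, every atom has cross number $\ge1$, which makes $t=2$ impossible (a factorization of $C$ would need at least one atom, forcing $\mathsf k(C)\ge 1$), so $t\ge 3$; it also gives $\max\mathsf L(B)\le\mathsf k(B)=t$, hence $\max\mathsf L(B)=t$, and $\max\mathsf L(C)\le\mathsf k(C)\le t-2$ with $\max\mathsf L(C)\ge1$. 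Therefore $\mathsf L(B)$ contains $t$ and the value $1+\max\mathsf L(C)\in[2,t-1]$, so $\min\Delta(G_0)\le t-2\le|G_0|-2$.

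The routine content is in parts (1) and (2); the delicate point is making the estimates in part (3) sharp, and I expect the only (minor) obstacle to be avoiding off-by-one errors there. In part (3a) the key is that the exponent $N$ divides $\exp(G)$ and that $\mathsf k(U)\ge2/N$ (which uses $|U|\ge2$), so the gap $N(1-\mathsf k(U))$ stays at $\exp(G)-2$ rather than $\exp(G)-1$; in part (3b) it is the interaction between the LCN property and the cross number — bounding factorization lengths of $B$ and $C$ above by cross numbers — that both yields the bound $|G_0|-2$ and forces $t\ge3$, so that $t-2\ge1$ is a genuine distance.
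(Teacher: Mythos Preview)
Your arguments are correct. The paper itself does not give a proof here at all: it simply cites \cite[Proposition 6.8.2 and Lemmas 6.8.5 and 6.8.6]{Ge-HK06a}. So your write-up is not so much a different route as an unpacking of the cited results.

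A few remarks on the details. For parts (1) and (2), invoking Proposition~\ref{2.3}(3) is legitimate within the paper's logical order (Proposition~\ref{2.3} precedes Lemma~\ref{3.2} and does not cite it), though it is mildly circular in spirit since both ultimately rest on \cite[Proposition 6.8.2]{Ge-HK06a}; your alternative direct computation with $G_0=\{g,-g\}$ is the cleaner way to make (1) self-contained and is exactly the standard argument. The appeal to Lemma~\ref{2.1} for $\Delta^*(\langle g\rangle)\subset\Delta^*(G)$ is a slight misattribution: what you actually use is the description $\Delta^*(G)=\{\min\Delta(G_0)\mid G_0\subset G,\ \Delta(G_0)\ne\emptyset\}$ together with the fact that $\mathcal B(G_0)$ is intrinsic to $G_0$. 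For part (3a), your bound $\min\Delta(G_0)\le N(1-\mathsf k(U))$ follows because $\min\Delta(G_0)=\gcd\Delta(G_0)$ divides any difference of two lengths; the key inequality $\mathsf k(U)\ge 2/N$ is handled correctly via $|U|\ge 2$. For part (3b), the contradiction ruling out $t=2$ is right (nonempty $C$ forces $\mathsf k(C)\ge 1$ by LCN, against $\mathsf k(C)<t-1=1$), and the final step $\min\Delta(G_0)\le t-(1+\max\mathsf L(C))\le t-2\le|G_0|-2$ is clean. Your arguments for (3a) and (3b) are essentially those of \cite[Lemmas 6.8.5 and 6.8.6]{Ge-HK06a}.
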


\begin{proof}
See \cite[Proposition 6.8.2 and Lemmas 6.8.5 and 6.8.6]{Ge-HK06a}.
\end{proof}

\medskip
\begin{lemma}\label{3.5}
Let $G_0\subset G$ be a non-half-factorial subset satisfying the following two conditions{\rm \,:}
\begin{itemize}
\item[(a)] There is some $g\in G_0$ such that $\Delta (G_0\setminus \{g\})=\emptyset$.
\item[(b)] There is some  $U\in \mathcal A(G_0)$ with $\mathsf k(U)=1$ and $\gcd(\mathsf v_g(U),\ord(g))=1$.
\end{itemize}
Then $\mathsf k(\mathcal A(G_0))\subset \N$ and
\[
\min \Delta (G_0)\mid \gcd \{\mathsf k(A)-1 \mid A\in \mathcal A(G_0)\}\,.
\]
Note that the conditions hold if $\Delta(G_1)=\emptyset$ for each $G_1\subsetneq G_0$ and there exists some $G_2$ such that $\langle G_2\rangle=\langle G_0\rangle$ and $|G_2|\le |G_0|-2$.
\end{lemma}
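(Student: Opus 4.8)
The plan is to prove the two assertions in turn: first the integrality $\mathsf k(\mathcal A(G_0))\subset\N$, and then the divisibility, which I would in fact sharpen to the equality $\min\Delta(G_0)=\gcd\{\mathsf k(A)-1\mid A\in\mathcal A(G_0)\}$. Throughout write $n=\ord(g)$ and $u=\mathsf v_g(U)$, so that $\gcd(u,n)=1$ by hypothesis (b), and recall that $g^n\in\mathcal B(G_0)$ with $\mathsf k(g^n)=n/\ord(g)=1$.

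For integrality, fix an atom $A\in\mathcal A(G_0)$ and set $m=\mathsf v_g(A)$. If $m=0$, then $\supp(A)\subset G_0\setminus\{g\}$, which is half-factorial by (a), so Lemma \ref{2.2} gives $\mathsf k(A)=1\in\N$. If $m>0$, I would use $\gcd(u,n)=1$ to choose $c\in\N_0$ with $m+cu\equiv 0\pmod n$, say $m+cu=en$ with $e\in\N$, and consider $X=A\cdot U^{c}\in\mathcal B(G_0)$. Its $g$-part is $g^{en}=(g^n)^{e}$, and the complementary part $Y:=Xg^{-en}$ is supported on $G_0\setminus\{g\}$ with $\sigma(Y)=\sigma(X)-en\,g=0$, hence $Y\in\mathcal B(G_0\setminus\{g\})$ and $\mathsf k(Y)\in\N_0$ by half-factoriality. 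Comparing cross numbers through additivity of $\mathsf k$, the relation $\mathsf k(U)=1$, and $\mathsf k(g^n)=1$ yields $\mathsf k(A)+c=\mathsf k(X)=e+\mathsf k(Y)$, so $\mathsf k(A)=e+\mathsf k(Y)-c\in\Z$. As $\mathsf k(A)>0$, this gives $\mathsf k(A)\in\N$.

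Now set $\delta=\gcd\{\mathsf k(A)-1\mid A\in\mathcal A(G_0)\}$; since $G_0$ is non-half-factorial, some atom has $\mathsf k\ge 2$, so $\delta\in\N$. The easy inclusion $\delta\mid\min\Delta(G_0)$ is pure bookkeeping: for any $B\in\mathcal B(G_0)$ and any factorization $B=A_1\cdots A_\ell$ into atoms, additivity of $\mathsf k$ gives $\ell=\mathsf k(B)-\sum_{i=1}^{\ell}(\mathsf k(A_i)-1)\equiv\mathsf k(B)\pmod{\delta}$, so all lengths in $\mathsf L(B)$ are congruent modulo $\delta$. Hence every element of $\Delta(G_0)$ is a multiple of $\delta$, and in particular $\delta\mid\min\Delta(G_0)$.

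The main point — and the inclusion $\min\Delta(G_0)\mid\delta$, equivalent by the above to realizing $\delta$ as an actual distance — is the harder half. For each atom $A_i$ with $v_i:=\mathsf k(A_i)-1>0$ (necessarily $\mathsf v_g(A_i)>0$, since any $g$-free atom has $\mathsf k=1$) I form $X_i=A_i\,U^{c_i}=(g^n)^{e_i}Y_i$ exactly as above; then $X_i$ admits the two explicit factorizations $A_iU^{c_i}$ and $(g^n)^{e_i}$ together with a factorization of $Y_i$, of lengths $1+c_i$ and $1+c_i+v_i$, differing by $v_i$. Choosing large $k_i\in\N$ and putting $B=\prod_i X_i^{k_i}$, the sumset property of sets of lengths gives $\mathsf L(B)\supset s_0+\{\sum_i j_iv_i\mid 0\le j_i\le k_i\}$ with $s_0=\sum_i k_i(1+c_i)$. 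Since $\gcd\{v_i\}=\delta$, a bounded Frobenius (Chicken–McNugget) argument shows that for $k_i$ large this set contains two values differing by exactly $\delta$; by the congruence of the previous paragraph no length of $B$ lies strictly between them, so they are consecutive in $\mathsf L(B)$ and $\delta\in\Delta(\mathsf L(B))\subset\Delta(G_0)$. Thus $\min\Delta(G_0)\le\delta$, which together with $\delta\mid\min\Delta(G_0)$ forces $\min\Delta(G_0)=\delta$, whence $\min\Delta(G_0)\mid\delta$. I expect the bounded coin-problem step to be the one requiring genuine care; the rest is manipulation of cross numbers. Finally, the concluding remark is obtained by verifying its hypotheses imply (a) and (b): minimality gives (a) for every $g$, while the existence of $G_2$ with $\langle G_2\rangle=\langle G_0\rangle$ and $|G_2|\le|G_0|-2$ produces, via Lemma \ref{3.1}, an element $g$ and an atom $U$ satisfying (b).
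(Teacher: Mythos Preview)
Your argument is correct and in fact establishes the sharper equality $\min\Delta(G_0)=\gcd\{\mathsf k(A)-1\mid A\in\mathcal A(G_0)\}$, not merely the divisibility asserted in the lemma. The paper itself gives no argument for the main assertions---it cites \cite[Lemma~6.8.5]{Ge-HK06a}---and only verifies the concluding ``Note that'' clause, for which your sketch via Lemma~\ref{3.1} matches the paper's reasoning (pick $g\in G_0\setminus G_2$; then $g\in\langle G_2\rangle$ yields an atom $U$ over $G_2\cup\{g\}\subsetneq G_0$ with $\mathsf v_g(U)=1$, and half-factoriality of this proper subset gives $\mathsf k(U)=1$).

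One remark on economy: for the lemma \emph{as stated}, the Frobenius step is superfluous. Your construction $X=AU^{c}=(g^n)^{e}Y$ already gives $\{1+c,\,1+c+(\mathsf k(A)-1)\}\subset\mathsf L(X)$, and since any difference of two lengths of the same element is a sum of successive distances, the standard fact $\min\Delta(G_0)=\gcd\Delta(G_0)$ immediately yields $\min\Delta(G_0)\mid(\mathsf k(A)-1)$ for every atom $A$, hence $\min\Delta(G_0)\mid\delta$. Your detour through realizing $\delta$ as an actual distance buys the stronger equality (which is a nice bonus) but is not needed here. A small quibble: you call realizing $\delta\in\Delta(G_0)$ ``equivalent'' to $\min\Delta(G_0)\mid\delta$; it is only \emph{sufficient} (together with $\delta\mid\min\Delta(G_0)$), though this does not affect the validity of the proof.
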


\begin{proof}
The first statement follows from \cite[Lemma 6.8.5]{Ge-HK06a}.  If $\Delta (G_1) = \emptyset$ for all $G_1 \subsetneq G_0$, then Condition (a) holds. Let $G_2 \subsetneq G_1 \subsetneq G_0$ with $\langle G_2 \rangle = \langle G_0 \rangle$. If $g \in G_1 \setminus G_2$, then $\langle G_2 \rangle = \langle G_0 \rangle$ implies that there is some $U \in \mathcal A (G_1)$ with $\mathsf v_g (U)=1$, and since $G_1 \subsetneq G_0$, it follows that $\mathsf k (U)=1$.
\end{proof}

\medskip
\begin{lemma} \label{3.3}
Let  $G_0\subset G$ be a subset,  $g\in G_0\setminus\{0\}$, and  $g\in \langle G_0\setminus\{g\}\rangle$. Then  for each prime $p$ dividing $\ord(g)$, there exists an atom $A\in \mathcal A(G_0)$ with $2\le |\supp(A)|\le r+1$,  $\mathsf v_g(A) \le  \ord(g)/2$, $\mathsf v_g(A) \t \ord(g)$, and $p\nmid \mathsf v_g(A)$. In particular,
\begin{enumerate}
\item If $|G_0|\ge r+2$, then there exist $s_0<\ord(g)$ and $E\subsetneq G_0\setminus \{g\} $ such that $s_0g\in \langle E \rangle$.

\item If $\ord(g)$ is a prime power, then there exists a subset $E\subset G_0\setminus\{g\}$ with $|E|\le r$ such that $g\in \langle E\rangle $.
\end{enumerate}
\end{lemma}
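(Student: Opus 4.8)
The plan is to produce the required atom $A$ by a pigeonhole/extremal argument inside $\mathcal{B}(G_0)$, and then derive (1) and (2) as direct consequences. Since $g \in \langle G_0 \setminus \{g\}\rangle$, by Lemma \ref{3.1}(1) the quantity $m := \min\{ k \in \N \mid kg \in \langle G_0 \setminus \{g\}\rangle\}$ is a proper divisor candidate of $\ord(g)$; in fact $m \mid \ord(g)$ and $m < \ord(g)$ unless... well, $m$ divides $\ord(g)$ by that lemma, and $m \ge 1$. First I would pick a minimal zero-sum sequence $A_0$ over $G_0 \cup \{-mg\}$-type data; more concretely, choose $h \in G_0 \setminus \{g\}$ and an expression $mg = \sum_{e \in E} c_e e$ with $E \subset G_0 \setminus \{g\}$, then the sequence $g^m \cdot \prod_{e\in E}(-e)^{c_e}$ (after reducing to a genuine sequence over $G_0$, using $-e$'s relation to $G_0$) has sum zero, and extracting an atom $A$ from it with $\mathsf{v}_g(A) > 0$ gives $\mathsf{v}_g(A) \le m \le \ord(g)/2$ provided $m < \ord(g)$, i.e. provided $g$ is not "independent" — but that is exactly the hypothesis $g \in \langle G_0\setminus\{g\}\rangle$ together with $m \mid \ord(g)$ forcing $m \le \ord(g)/2$ once $m \ne \ord(g)$, and $m = \ord(g)$ is impossible since then $g \notin \langle G_0\setminus\{g\}\rangle$ would fail... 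I need to be careful: $m \mid \ord(g)$ always, and $m \le \ord(g)$; if $m = \ord(g)$ then $0 = \ord(g)\, g = mg \in \langle G_0\setminus\{g\}\rangle$ trivially, which does not contradict the hypothesis. So the genuine content is: the hypothesis $g \in \langle G_0\setminus\{g\}\rangle$ says precisely $m < \ord(g)$ is \emph{not} automatic; rather $g = 1\cdot g \in \langle G_0\setminus\{g\}\rangle$ forces $m \mid \ord(g)$ and $m \le$ ... hmm. Let me restructure.

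The cleaner route: set $m = \min\{k \in \N \mid kg \in \langle G_0 \setminus \{g\}\rangle\}$. By Lemma \ref{3.1}(1), $m \mid \ord(g)$ and $m$ equals $\gcd\{\mathsf{v}_g(A) \mid A \in \mathcal{A}(G_0)\} = \min\{\mathsf{v}_g(A) \mid \mathsf{v}_g(A) > 0\}$. Since $g \in \langle G_0 \setminus\{g\}\rangle$ we have $m \le \ord(g)$, and because $m \mid \ord(g)$, either $m = \ord(g)$ or $m \le \ord(g)/2$. Next I would rule out $m = \ord(g)$: indeed $1 \cdot g = g \in \langle G_0\setminus\{g\}\rangle$ shows $m \le$ the order of $g$ in the quotient $\langle G_0\rangle / \langle G_0\setminus\{g\}\rangle$, which is a proper divisor of $\ord(g)$ unless that quotient is trivial on $g$; but $g \in \langle G_0\setminus\{g\}\rangle$ makes the image of $g$ trivial there, so that "order" is $1$ and hence $m \mid 1$... no, $m$ is not that order, $m$ is the least $k$ with $kg$ already \emph{in} the subgroup, and $g$ itself is in it, so $m = 1$?? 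That cannot be the intent either, since then $\mathsf{v}_g(A) = 1$ for some atom. Actually wait — that IS consistent: if $g \in \langle G_0\setminus\{g\}\rangle$ then indeed there's an atom with $\mathsf{v}_g = 1$... no, not necessarily, because writing $g$ as a combination of other elements and turning it into a zero-sum sequence may force repetitions. The correct statement from Lemma \ref{3.1}(1) is that $m = \gcd\{k : kg \in \langle G_0\setminus\{g\}\rangle\}$, and $g$ being in the subgroup gives $1$ in that set, so $m = 1$ — and then the atom has $\mathsf{v}_g(A) = m = 1 \le \ord(g)/2$ since $\ord(g) \ge 2$, with $\mathsf{v}_g(A) = 1 \mid \ord(g)$ and $p \nmid 1$ for every prime $p$. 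So the divisibility/prime-avoidance claims are automatic once we have an atom realizing the minimum $m$; the only real work is the \emph{support size bound} $|\supp(A)| \le r+1$.

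To get $|\supp(A)| \le r+1$: among all atoms $A \in \mathcal{A}(G_0)$ with $\mathsf{v}_g(A) = m$ (equivalently $\mathsf{v}_g(A)$ minimal positive, and $p \nmid \mathsf{v}_g(A)$ — here I may need to choose $m$ relative to the prime $p$, replacing $m$ by a suitable minimal positive value coprime to $p$ among the $\mathsf{v}_g(A)$, which exists because $\gcd$ of all of them is $1$... actually I should take the least positive $\mathsf{v}_g(A)$ not divisible by $p$; such an atom exists since otherwise $p$ divides every $\mathsf{v}_g(A)$, contradicting that their gcd is $1$, wait their gcd is $m$ which divides $\ord(g)$ and we want $p \nmid \mathsf{v}_g(A)$ — this forces us to allow $\mathsf{v}_g(A)$ possibly larger than the global minimum, but still $\mathsf{v}_g(A) \mid \ord(g)$ and $\le \ord(g)/2$), pick one with $|\supp(A)|$ minimal. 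Then I claim $\supp(A) \setminus \{g\}$ is independent (a basis of the subgroup it generates), for otherwise a nontrivial relation among its elements lets me "trade" and produce a shorter atom with the same $\mathsf{v}_g$-value, or split $A$ contradicting atomicity — this is the standard minimal-support trick. Independence gives $|\supp(A) \setminus\{g\}| \le \mathsf{r}(\langle G_0\rangle) \le r$, hence $|\supp(A)| \le r+1$. The divisibility $\mathsf{v}_g(A) \mid \ord(g)$ and the bound $\le \ord(g)/2$ I then recover from Lemma \ref{3.1}(1) applied to the set $\supp(A)$ (or directly: $\mathsf{v}_g(A) g = -\sum_{h \ne g} \mathsf{v}_h(A) h \in \langle \supp(A)\setminus\{g\}\rangle$, and minimality arguments).

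For the two corollaries: (1) If $|G_0| \ge r+2$, then $\supp(A) \subsetneq G_0$ for the atom $A$ just constructed (since $|\supp(A)| \le r+1 < |G_0|$), so taking $s_0 = \mathsf{v}_g(A) < \ord(g)$ and $E = \supp(A) \setminus \{g\} \subsetneq G_0 \setminus\{g\}$ gives $s_0 g \in \langle E\rangle$. (2) If $\ord(g) = q^a$ is a prime power, apply the main statement with $p = q$: we get an atom $A$ with $\mathsf{v}_g(A) \mid \ord(g) = q^a$ and $q \nmid \mathsf{v}_g(A)$, forcing $\mathsf{v}_g(A) = 1$; then $g = -\sum_{h \in E} \mathsf{v}_h(A) h \in \langle E\rangle$ with $E = \supp(A)\setminus\{g\}$ and $|E| \le r$.

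The main obstacle I anticipate is the minimal-support exchange argument: making precise that if $\supp(A)\setminus\{g\}$ carries a nontrivial relation then one can modify $A$ to reduce $|\supp(A)|$ while keeping $\mathsf{v}_g$ fixed (or keeping it coprime to $p$ and $\mid \ord(g)$). One has to handle the bookkeeping of which element to remove and verify the result can be taken to be an atom (pass to an atomic divisor), and to confirm the $\mathsf{v}_g$-coprimality/divisibility survives — this likely requires invoking Lemma \ref{3.1}(1) once more on the smaller support rather than arguing from scratch. Everything else is routine linear algebra over $\Z$ and the already-cited lemma.
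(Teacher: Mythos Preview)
Your handling of the two corollaries is fine once the main statement is in hand, and you correctly extract from Lemma~\ref{3.1}.1 that the hypothesis $g\in\langle G_0\setminus\{g\}\rangle$ forces the existence of an atom with $\mathsf v_g(A)=1$. The gap is in the support bound $|\supp(A)|\le r+1$, and it is a genuine one.

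Your ``minimal-support trick'' does not yield independence of $\supp(A)\setminus\{g\}$; it only yields that this set is a \emph{minimal generating set} of the subgroup it spans. More importantly, even independence would not give what you want: an independent set of nonzero elements has size bounded by the \emph{total rank} $\mathsf r^*(G)$, not by $\mathsf r(G)=r$. Concretely, take $G=C_6=\langle g_0\rangle$ (so $r=1$) and $G_0=\{g_0,2g_0,3g_0\}$. The unique minimal-support atom with $\mathsf v_{g_0}(A)=1$ is $A=g_0\cdot(2g_0)\cdot(3g_0)$, with $|\supp(A)|=3>r+1=2$; and $\{2g_0,3g_0\}$ is in fact independent, yet has two elements. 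So both steps of your argument---``minimal support $\Rightarrow$ independent'' and ``independent $\Rightarrow$ size $\le r$''---fail here.

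What the paper does instead is to pass to the $p$-primary component before bounding anything. Writing $n=p_1^{k_1}\cdots p_t^{k_t}$ and fixing $p=p_\nu$, one multiplies by $n/p_\nu^{k_\nu}$ and works inside the $p_\nu$-group $G_\nu=\langle\, (n/p_\nu^{k_\nu})h : h\in G_0\setminus\{g\}\,\rangle$. There a minimal $E_\nu\subset G_0\setminus\{g\}$ with $(n/p_\nu^{k_\nu})g\in\langle (n/p_\nu^{k_\nu})E_\nu\rangle$ gives a minimal generating set of a $p$-group, and for $p$-groups minimal generating sets \emph{do} have size $\le r$. One then sets $d_\nu=\min\{k:kg\in\langle E_\nu\rangle\}$; Lemma~\ref{3.1}.1 gives $d_\nu\mid\gcd(n/p_\nu^{k_\nu},\ord(g))$, whence $p\nmid d_\nu$, $d_\nu\mid\ord(g)$, $d_\nu\le\ord(g)/2$, and the corresponding atom has support in $\{g\}\cup E_\nu$ of size $\le r+1$. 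The projection to a $p$-group is precisely the missing idea in your sketch.
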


\begin{proof}
 We set $\exp(G)=n = p_1^{k_1} \cdot \ldots \cdot p_t^{k_t}$, where $t, k_1, \ldots, k_t \in \N$ and $p_1, \ldots, p_t$ are distinct primes. 
  Let $\nu \in [1,t]$ with $p_{\nu} \t \ord (g)$.
Since $g\in \langle G_0\setminus \{g\} \rangle$, it follows that $0 \ne \frac{n}{p_{\nu}^{k_{\nu}}}g\in G_{\nu} = \langle \frac{n}{p_{\nu}^{k_{\nu}}}h \mid h\in G_0\setminus \{g\}\rangle$. Obviously, $G_{\nu}$ is a $p_{\nu}$-group.
Let $E_{\nu} \subset G_0\setminus \{g\}$ be minimal such that $\frac{n}{p_{\nu}^{k_{\nu}}}g \in  \langle \frac{n}{p_{\nu}^{k_{\nu}}} E_{\nu}\rangle$. 
The minimality of $E_{\nu}$ implies that $|E_{\nu}| = |\frac{n}{p_{\nu}^{k_{\nu}}} E_{\nu}|$ and it implies that $\frac{n}{p_{\nu}^{k_\nu}} E_{\nu}$ is a minimal generating set of $G_{\nu}' := \langle \frac{n}{p_{\nu}^{k_{\nu}}} E_{\nu} \rangle$. Thus \cite[Lemma A.6.2]{Ge-HK06a} implies that $|\frac{n}{p_{\nu}^{k_\nu}} E_{\nu}| \le \mathsf r^* (G_{\nu}')=\mathsf r(G_{\nu}')\le \mathsf r(G_{\nu})$ (note that $\mathsf r^* (G_{\nu}')$ is the total rank of $G_{\nu}'$).  Putting all together we obtain that
\[
1\le |E_{\nu}| = |\frac{n}{p_{\nu}^{k_{\nu}}} E_{\nu}| \le \mathsf r (G_{\nu})  \le r \,.
\]
Let $d_{\nu} \in \N$ be  minimal  such that $d_{\nu}g\in \langle E_{\nu}\rangle$. Since
$0 \ne \frac{n}{p_{\nu}^{k_{\nu}}}g\in \langle E_{\nu} \rangle$, it follows that $d_{\nu} < \ord (g)$.
By Lemma \ref{3.1}.1, $d_{\nu}\t \gcd(\frac{n}{p_{\nu}^{k_{\nu}}}, \ord(g))$ and there exists an atom $U_{\nu}$ such that $\mathsf v_g(U_{\nu})=d_{\nu}$ and $|\supp(U_{\nu})\setminus\{g\}| \le |E_{\nu}| \le r$. Therefore $|\supp (U_{\nu})| \le r+1$, $d_{\nu}\t \ord(g)$, and $p_{\nu}\nmid d_{\nu}$. Since $p_{\nu}\t \ord (g)$, it follows that  $d_{\nu}\le \ord(g)/2$ and $|\supp (U_{\nu})| \ge 2$. 

If $|G_0|\ge r+2$, then $|E_{\nu}|\le r<|G_0\setminus \{g\}|$ implies that $E_{\nu} \subsetneq G_0 \setminus \{g\}$, and the assertion holds with $E=E_{\nu}$ and $s_0 = d_{\nu}$.

If $\ord(g)$ is a prime power, then $\ord(g)$ is a power of $p_{\nu}$ which implies that  $\gcd(\frac{n}{p_{\nu}^{k_{\nu}}}, \ord(g))=1$ whence $d_{\nu}=1$ and $g\in \langle E_{\nu}\rangle$.
\end{proof}

\medskip
\begin{lemma} \label{3.4}
Let $G_0 \subset G$ be a minimal non-half-factorial {\rm LCN}-set with $|G_0|\ge r+2$ such that $h \in \langle G_0 \setminus \{h\} \rangle$ for every $h \in G_0$.  Suppose that
for each two distinct elements $h, h' \in G_0$,  we have $h\not\in \langle G_0\setminus \{h,\ h'\}\rangle $,  and each atom $A\in \mathcal A(G_0)$ with $\supp(A)=G_0$ has cross number $\mathsf k(A)>1$.
Then $\min \Delta(G_0)\le \lfloor\frac{n}{2}\rfloor-1$.
\end{lemma}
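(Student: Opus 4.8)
The plan is to argue directly inside $\mathcal B(G_0)$, using throughout that, $G_0$ being minimal non-half-factorial, $G_0\setminus\{g\}$ is half-factorial for \emph{every} $g\in G_0$ (so in particular every atom with support a proper subset of $G_0$ lies in a half-factorial $\mathcal B(G_1)$, hence has cross number $1$). First I would record the basic facts: since $0$ lies in every subgroup, the hypothesis $h\notin\langle G_0\setminus\{h,h'\}\rangle$ forces $0\notin G_0$. Fix $g\in G_0$ with $\ord(g)=n$ and apply Lemma~\ref{3.3} to $g$ (legitimate as $|G_0|\ge r+2$): this yields an atom $A=g^dA'$ with $d\mid\ord(g)$, $d\le\ord(g)/2$ and $2\le|\supp(A)|\le r+1$, whence $\supp(A)\subsetneq G_0$ and $\mathsf k(A)=1$. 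Lemma~\ref{3.1}.2 (here the hypothesis $h\notin\langle G_0\setminus\{h,h'\}\rangle$ enters) gives $\gcd(\mathsf v_h(A),\ord(h))>1$ on $\supp(A)$; in particular $d\ge 2$, and letting $p$ range over the primes dividing $\ord(g)$ in Lemma~\ref{3.3} shows that no element of $G_0$ has prime-power order, so $n\ge 6$ and the target $\lfloor n/2\rfloor-1$ is at least $2$. Finally, since $g\in\langle G_0\setminus\{g\}\rangle$, Lemma~\ref{3.1}.1 gives $\min\{k\in\N:kg\in\langle G_0\setminus\{g\}\rangle\}=1$, so some atom $A_g$ has $\mathsf v_g(A_g)=1$; by the previous sentence $\supp(A_g)=G_0$, and then $\mathsf k(A_g)>1$ by hypothesis.

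\textbf{The length identity.}
Next I would set up the key bookkeeping. For $B\in\mathcal B(G_0)$ and the fixed $g$, in any factorization of $B$ the atoms not containing $g$ lie in the half-factorial monoid $\mathcal B(G_0\setminus\{g\})$ and contribute cross number $1$ each; comparing with $\mathsf k(B)=\sum(\text{cross numbers of factors})$ shows that the length of that factorization equals $\mathsf k(B)-\sum_i(\mathsf k(A_i)-1)$, the sum over the atoms $A_i$ of the factorization with $g\in\supp(A_i)$. Since $\mathsf k(A_i)-1=0$ for such an $A_i$ of proper support while $\mathsf k(W)-1>0$ for a full-support atom $W$, the length variation within $\mathsf L(B)$ is carried entirely by the multiset of full-support atoms used, together with how finely the letters $g$ of $B$ can be split among atoms of proper support and the atom $g^{\ord(g)}$.

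\textbf{The construction.}
Now for a full-support atom $W$ and large exponents $N,M$ I would consider $B:=A^{N}W^{M}$ (and the variants $A^{N}A_g^{M}$, etc.). For each admissible count $j$ of copies of $W$ one builds a factorization by removing $j$ copies of $W$, consuming the remaining letters $g$ greedily with copies of $g^{\ord(g)}$ and of $A$ (possible, because $d\mid\ord(g)$, exactly when $j$ lies in a fixed residue class modulo $d$), and absorbing the rest into a factorization in $\mathcal B(G_0\setminus\{g\})$; in the length identity all fractional contributions cancel and one is left with length $=(\text{const})-j(\mathsf k(W)-1)$. Letting $j$ run through its residue class produces an arithmetic progression inside $\mathsf L(B)$ of difference $d(\mathsf k(W)-1)$; doing this with every full-support atom simultaneously and comparing residues gives $\min\Delta(G_0)\le d(\mathsf k(W)-1)$ for each full-support atom $W$, and in fact a divisibility/gcd statement over the $\mathsf k(W)-1$. (A byproduct is that $\mathsf k$ is close to $\Z$-valued on $\mathcal A(G_0)$.)

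\textbf{The crux.}
It remains to turn this into the bound $\lfloor n/2\rfloor-1$; this is the step I expect to absorb most of the work. One would want a full-support atom $W$ with $\mathsf k(W)-1$ small enough that $d(\mathsf k(W)-1)\le\lfloor n/2\rfloor-1$ — ideally $\min\Delta(G_0)\le\mathsf k(W)-1$ with $\mathsf k(W)\le\lfloor n/2\rfloor$, or directly an element of $\mathcal B(G_0)$ whose set of lengths has a gap $\le d-1\le\lfloor n/2\rfloor-1$. The leverage at hand is: $d\le\ord(g)/2$ from Lemma~\ref{3.3}; the relation $A^{\ord(g)/d}=g^{\ord(g)}(A')^{\ord(g)/d}$; the element $h_0\in G_0\setminus\supp(A)$ with $h_0\in\langle G_0\setminus\{h_0\}\rangle$; the atom $A_g$ with $\mathsf v_g(A_g)=1$; and Lemma~\ref{3.1}.2 rigidly controlling all proper-support atoms. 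Using these I would try to "complete" $A$ to a full-support atom whose cross number is pinned down by $d$, or else to exhibit the required short length-gap directly, quite possibly after a case analysis organised by how finely the letters $g$ (for a well-chosen $g\in G_0$) can be redistributed. The hard part is precisely this conversion of the generation hypotheses into an upper bound on a cross number of a full-support atom.
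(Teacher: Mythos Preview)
Your setup is correct and matches the paper's opening moves: no element of $G_0$ has prime-power order, every proper-support atom has cross number $1$, and Lemma~\ref{3.3} gives a proper-support atom with $g$-multiplicity $d\ge2$ dividing $\ord(g)$ and at most $\ord(g)/2$. (One slip: $G_0$ need not contain an element of order $n$; the paper takes any $g\in G_0$ and bounds by $\ord(g)\le n$.) But your proof stops exactly where the content begins. Even granting the relation $\min\Delta(G_0)\mid d\bigl(\mathsf k(W)-1\bigr)$ for every full-support atom $W$, nothing in your toolkit bounds $\mathsf k(W)-1$, and the standard route to integer cross numbers (Lemma~\ref{3.5}) is blocked here, since by Lemma~\ref{3.1}.2 every atom with $\mathsf k=1$ satisfies $\gcd(\mathsf v_g,\ord(g))>1$. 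Hunting for a full-support atom of small cross number is the wrong target, and your ``crux'' paragraph is an acknowledgement that the argument is missing, not a sketch of one.

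The paper avoids cross-number estimates for full-support atoms entirely. Set $s=\min\{m\ge1: mg\in\langle E\rangle\ \text{for some}\ E\subsetneq G_0\setminus\{g\}\}$; the role of $s$ is that any atom containing $g$ with $\mathsf v_g$-value below $s$ is \emph{forced} to have full support. Take a proper-support atom $V$ with $\mathsf v_g(V)=s$, pick a prime $p\mid s$, and use Lemma~\ref{3.3} again to obtain a second proper-support atom $U_1$ with $p\nmid\mathsf v_g(U_1)$, so $d:=\gcd(s,\mathsf v_g(U_1))<s$. Choose small $x_1,x_2$ with $x_1s+x_2\,\mathsf v_g(U_1)=\ord(g)+d$; then $V^{x_1}U_1^{x_2}=g^{\ord(g)}\cdot W$ with $\mathsf v_g(W)=d<s$, so some atom of $W$ has full support, and $\mathsf L(V^{x_1}U_1^{x_2})$ acquires a second length $l\ge2$ strictly below $x_1+x_2$. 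The finish is direct counting, not a cross-number bound: each of $V,U_1$ carries more than $d$ copies of $g$ while $W$ carries only $d$, so when the $\ord(g)$ copies of $g$ are redistributed among the $x_1+x_2$ original factors at most one factor can receive a single $g$; hence $\ord(g)\ge 2(x_1+x_2)-1$ and $\min\Delta(G_0)\le x_1+x_2-2\le\lfloor n/2\rfloor-1$. The idea you are missing is this minimality parameter $s$ and the pairing of \emph{two} proper-support atoms whose $g$-multiplicities have gcd strictly below $s$.
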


\begin{proof}
We choose an element $g\in G_0$. If $\ord(g)$ is a prime power, then there exists $E\subset G_0\setminus \{g\}$ such that $g\in \langle E\rangle$ and $|E|\le r<|G_0|-1$ by Lemma \ref{3.3}.2, a contradiction to the assumption on $G_0$. Thus $\ord(g)$ is not a prime power.

Let $s \in \N$ be  minimal  such that there exists a subset $E\subsetneq G_0\setminus \{g\}$ with $sg\in \langle E \rangle$, and  by Lemma \ref{3.3}.1, we observe that $s < \ord (g)$. Let  $E \subsetneq G_0\setminus \{g\}$ be  minimal such that $sg\in \langle E \rangle$. By Lemma \ref{3.1}.1, there is an atom   $V$  with $\mathsf v_g(V)=s\t \ord(g)$ and $\supp(V)=\{g\}\cup E\subsetneq G_0$.
By Lemma \ref{3.1}.2, for each $h\in \supp(V)$, $\mathsf v_h(V)\ge 2$ which implies that
 $s \ge 2$. Thus there is a  prime  $p\in \N$   dividing $s$ and hence $p \t s \t \ord (g)$.  By Lemma \ref{3.3}, there exists an atom $U_1$ such that $|\supp(U_1)|\le r+1$, $\mathsf v_g(U_1)\t \ord(g)$, and  $p\nmid  \mathsf v_g(U_1)$, and therefore $\supp(U_1)\subsetneq G_0$.

Let $d=\gcd(s,\mathsf v_g(U_1))$. Then $d<s<\mathsf v_g(U_1)$ and
there exist $x_1\in [1,\frac{\ord(g)}{s}-1]$ and $x_2\in [1,\frac{\ord(g)}{\mathsf v_g(U_1)}-1]$ such that  $d+\ord(g)=x_1s+x_2\mathsf v_g(U_1)$. Let $V^{x_1}U_1^{x_2}=g^{\ord(g)}\cdot W$, where $W \in \mathcal B (G_0)$  with $\mathsf v_g(W)=d$, and let $W_1$ be an atom dividing $W$ with $\mathsf v_g(W_1)>0$. Since $\mathsf v_g(W_1)\le d<s$, the minimality of $s$ implies that $\supp(W_1)=G_0$ and hence  $\mathsf k(W_1)>1$. Since $G_0$ is minimal non-half-factorial, we have that $\mathsf k(V)=\mathsf k(U_1)=1$. Therefore there exists $l\in \N$ with $2\le l<x_1+x_2$ such that $\{l, x_1+x_2\}\subset \mathsf L(V^{x_1}U_1^{x_2})$.
Let $W=X_1\cdot \ldots \cdot X_{x_1+x_2}$ and $g^{\ord(g)}=g^{y_1}\cdot\ldots\cdot g^{y_{x_1+x_2}}$ such that $X_ig^{y_i}=V$ for each $i\in [1,x_1]$ and $X_ig^{y_i}=U_1$ for each $i\in [x_1+1, x_1+x_2]$, where $X_1,\ldots, X_{x_1+x_2}\in \mathcal F(G_0)$ and  $y_1,\ldots,y_{x_1+x_2}\in \N$.  If there exist distinct  $i,j\in [1,x_1+x_2]$ such that $y_i=y_j=1$, then $2\mathsf v_g(W)+2=2d+2\le \mathsf v_g(X_ig^{y_i}X_jg^{y_j})\le y_i+y_j+\mathsf v_g(W)$ which implies that $y+i+y_j\ge \mathsf v_g(W)+2\ge 3$, a contradiction.
 Therefore $|\{i\in [1,x_1+x_2]\mid y_i=1 \}|\le 1$. It follows that $1+2(x_1+x_2-1)\le \ord(g)$.
 Then
\[
\min \Delta(G_0)\le x_1+x_2-l\le \frac{\ord(g)+1}{2}-2\le \left\lfloor\frac{n}{2}\right\rfloor-1 \,.  \qedhere
\]
\end{proof}

\medskip
\begin{lemma} \label{3.6}
Let $G_0 \subset G$ be a minimal non-half-factorial {\rm LCN}-set with $|G_0|\ge r+2$ such that $h \in \langle G_0 \setminus \{h\} \rangle$ for every $h \in G_0$. Suppose that one of the following properties is  satisfied{\rm \,:}
\begin{enumerate}
\item[(a)] For each two distinct elements $h, h' \in G_0$,  we have $h\not\in \langle G_0\setminus \{h,\ h'\}\rangle $, and
           there is an atom $A\in \mathcal A(G_0)$ with  $\mathsf k(A)=1$  and $\supp(A)=G_0$.

\smallskip
\item[(b)] There is a subset $G_2 \subset G_0$ such that $\langle G_2 \rangle = \langle G_0 \rangle$ and $|G_2| \le |G_0|-2$.
\end{enumerate}
Then $\min \Delta (G_0) \le \max\{r-1, \left \lfloor \frac{n}{2}\right\rfloor-1 \}  $.
\end{lemma}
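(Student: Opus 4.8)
The plan is, via a few case distinctions, to reduce to the setting of Lemma~\ref{3.5}, and then to control the cross numbers occurring there.

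\smallskip
\textit{Step 1 (reduction to Lemma~\ref{3.5}).} If $|G_0|\le\max\{r-1,\lfloor n/2\rfloor-1\}+2$, then Lemma~\ref{3.2}.3(b) (applicable since $G_0$ is an LCN-set) already gives $\min\Delta(G_0)\le|G_0|-2\le\max\{r-1,\lfloor n/2\rfloor-1\}$; so assume $|G_0|\ge\max\{r+2,\lfloor n/2\rfloor+2\}$. Since $G_0$ is minimal non-half-factorial, $\Delta(G_1)=\emptyset$ for every $G_1\subsetneq G_0$. In case~(b) this is exactly the hypothesis of the concluding remark of Lemma~\ref{3.5}. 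In case~(a): either some atom $U$ has $\mathsf k(U)=1$ and $\gcd(\mathsf v_g(U),\ord(g))=1$ for some $g\in\supp(U)$, so that conditions (a) and (b) of Lemma~\ref{3.5} hold for this $g$; or the given full-support atom $A$ with $\mathsf k(A)=1$ satisfies $\gcd(\mathsf v_g(A),\ord(g))\ge2$, hence $\mathsf v_g(A)\ge2$, for every $g\in G_0$, whence $1=\mathsf k(A)=\sum_{g\in G_0}\mathsf v_g(A)/\ord(g)\ge2|G_0|/n$ and $|G_0|\le n/2$, contradicting our assumption. In all cases Lemma~\ref{3.5} applies, giving $\mathsf k(\mathcal A(G_0))\subset\N$ and $\min\Delta(G_0)\mid\gcd\{\mathsf k(A)-1\mid A\in\mathcal A(G_0)\}$.

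\smallskip
\textit{Step 2 (reduction to one atom).} It now suffices to exhibit an atom $B$ with $1<\mathsf k(B)\le1+\max\{r-1,\lfloor n/2\rfloor-1\}$, since then $\min\Delta(G_0)\mid\mathsf k(B)-1$ yields the claim. As $G_0$ is non-half-factorial there is an atom of cross number $>1$; by minimality of $G_0$ any such atom has full support $G_0$ (a proper nonempty subset is half-factorial, hence has only cross-number-$1$ atoms by Lemma~\ref{2.2}). Let $B$ be one of minimal cross number $m:=\mathsf k(B)\ge2$; the goal becomes $m\le\max\{r,\lfloor n/2\rfloor\}$.

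\smallskip
\textit{Step 3 (bounding $m$).} This is the crux. For each $g\in\supp(B)=G_0$, Lemma~\ref{3.3} supplies an atom $V$ with $2\le|\supp(V)|\le r+1$ (so $\supp(V)\subsetneq G_0$ and $\mathsf k(V)=1$), $\mathsf v_g(V)\mid\ord(g)$ and $\mathsf v_g(V)\le\ord(g)/2$. The idea is that by multiplying $B$ by suitable copies of such atoms $V$ and of the cross-number-$1$ atoms $g^{\ord(g)}$ and re-factoring, one can split off a full-support atom of strictly smaller cross number whenever $|B|$ is large relative to $\ord(g)\le n$; since every $g\in\supp(B)$ has order $\ge2$ we have $m=\mathsf k(B)\le|B|/2$, and this forces $m\le\lfloor n/2\rfloor$ in that regime. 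In the opposite regime the multiplicities $\mathsf v_g(B)$ are small and the relevant supports are bounded via $|\supp(V)|\le r+1$, and a rank count as in the proof of Lemma~\ref{3.3} bounds $m$ by $r$. Putting the two regimes together gives $m\le\max\{r,\lfloor n/2\rfloor\}$.

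\smallskip
I expect Step~3 to be the main obstacle: making the ``splitting off a smaller atom'' precise, and locating exactly the threshold at which the rank bound $r-1$ takes over from the exponent bound $\lfloor n/2\rfloor-1$, is the only place where the specific shape of the bound is forced and where essentially all the work lies; Steps~1 and~2 are bookkeeping around Lemmas~\ref{3.2}, \ref{3.3} and~\ref{3.5}.
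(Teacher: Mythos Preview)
Your Steps~1 and~2 are correct and coincide with the opening of the paper's proof (the paper phrases Step~2 contrapositively, as inequality~\eqref{e2}). The problem is entirely Step~3, which you rightly flag as the crux but do not carry out; moreover, parts of your sketch do not work as stated. The bound $m=\mathsf k(B)\le |B|/2$ is correct but does not yield $m\le\lfloor n/2\rfloor$: atoms over $G_0$ can have length up to $\mathsf D(G)$, which already for $G=C_n^r$ is at least $r(n-1)+1$, far larger than $n$. And there is no ``rank count as in the proof of Lemma~\ref{3.3}'' that bounds $\mathsf k(B)$ by $r$; knowing that certain auxiliary atoms $V$ have support of size $\le r+1$ says nothing about the cross number of $B$ itself.

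The paper's argument for Step~3 occupies assertions A1--A5 and is structurally different from your outline. It does not try to split off a smaller full-support atom from $B$. Instead it first shows (A1) that any $U\in\Omega_{>1}$ multiplied by at most $(n+1)/2$ atoms from $\Omega_{=1}$ can be re-factored entirely into $\Omega_{=1}$. Taking $U$ of minimal cross number and the number $m$ of auxiliary factors minimal, it then analyses the last auxiliary atom $A_m$ in detail: A2 shows every factor of the re-factorization must use a piece of $A_m$; A3 forces $|\supp(A_m)|\le r+1$; A4 replaces $A_m$ by at most $r-m$ atoms of the form $g^{\ord(g)}$; and A5 together with a multiplicity count on a single element finally yields $\mathsf k(U')\le\lfloor n/2\rfloor$ for some $U'\in\Omega_{>1}$, the contradiction. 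Both parts of~\eqref{e2}---the lower bound on cross numbers \emph{and} the gap $\ge\max\{r,\lfloor n/2\rfloor\}$ between distinct cross numbers in $\Omega_{>1}$---are used essentially (in A2 and A5). None of this combinatorial structure is present in your outline, and I do not see how to shortcut it along the lines you suggest.
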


\begin{proof}
Assume to the contrary that $\min \Delta (G_0) \ge  \max\{r, \left \lfloor \frac{n}{2}\right\rfloor \}$. Then Lemma \ref{3.2}.3.(b) implies that $|G_0|\ge 2+\min\Delta(G_0)\ge \frac{n}{2}+1$. If  Property (a) is satisfied, then  there exists some $g\in G_0$ such that $\mathsf v_g(A)=1$.
By Lemma \ref{3.5}, each of the two Properties (a) and (b) implies that
 $\mathsf k (U) \in \N$ for each $U \in \mathcal A (G_0)$ and
\[
\min \Delta (G_0) \t \gcd \big( \{ \mathsf k (U) - 1 \mid U \in \mathcal A (G_0) \} \big) \,.
\]
We set
\[
\Omega_{=1}=\{A\in \mathcal A (G_0) \mid \mathsf k (A)=1\} \quad \text{and} \quad \Omega_{>1}=\{A\in \mathcal A (G_0) \mid \mathsf k (A)>1\} \,.
\]
Thus for each $U_1, U_2 \in \Omega_{>1}$ we have
\begin{equation} \label{e2}
\begin{aligned}
&\mathsf k (U_1)\ge \max\{r+1, \left \lfloor \frac{n}{2}\right\rfloor+1 \} \quad \text{and}\\
& \quad \big(\text{either} \ \mathsf k (U_1)=\mathsf k (U_2) \ \text{or} \ |\mathsf k (U_1)-\mathsf k (U_2)|\ge  \max\{r, \left \lfloor \frac{n}{2}\right\rfloor \} \big) \,.
\end{aligned}
\end{equation}
Furthermore, for each $U \in \Omega_{=1}$ we have $\mathsf h (U) \ge 2$ (otherwise, $U$ would divide every atom $U_1 \in \Omega_{>1}$).
We claim that

\begin{enumerate}
\item[{\bf  A1.}] For each $U\in \Omega_{>1}$,  there are $A_1,  \ldots , A_m \in \Omega_{=1}$,
                  where $m \le \frac{n+1}{2}$,  such that $UA_1 \cdot \ldots \cdot A_m$ can be factorized into a product of atoms from $\Omega_{=1}$.
\end{enumerate}

\medskip

{\it Proof of {\bf  A1.}} \, Suppose that  Property (a) holds. As observed above there exists some $g\in G_0$ such that $\mathsf v_g(A)=1$.  Lemma \ref{3.3} implies that there is an atom $X$ such that
$2\le |\supp(X)|\le \mathsf r(G)+1$ and $1 \le  \mathsf v_g(X) \le  \ord(g)/2$. Since  $g\not\in \langle G_0\setminus \{g,\ h\}\rangle$ for any $h\in G_0\setminus \{g\}$, it follows that $\mathsf v_g (X) \ge 2$, and  $|G_0|\ge  r+2$ implies  $\supp(X)\subsetneq G_0$.

Suppose that Property (b) is satisfied.
We choose an element $g\in G_0\setminus G_2$. Then $g\in \langle G_2 \rangle$ and by Lemma \ref{3.1}.1,  there is an atom $A'$ with
$\mathsf v_g(A')=1$ and $\supp(A')\subset G_2 \cup \{g\}\subsetneq G_0$. This implies that  $A'\in \Omega_{=1}$. Let $h \in G_0$ such that  $\mathsf v_h(A')=\mathsf h(A')$. Since $\mathsf h(A')\ge 2$, we obtain that
$A'^{\lceil\frac{\ord(h)}{\mathsf h(A')}\rceil}=h^{\ord(h)}\cdot W$
where $W$ is a product of $\lceil\frac{\ord(h)}{\mathsf h(A')}\rceil-1$ atoms and $\mathsf v_g(W)=\lceil\frac{\ord(h)}{\mathsf h(A')}\rceil$.
Thus there exists an atom $X'$ with $2\le\mathsf v_g(X')\le \lceil\frac{\ord(h)}{\mathsf h(A')}\rceil\le \frac{n}{2}+1$.

\smallskip
Therefore both properties imply that there are  $A, X \in \mathcal A (G_0)$ and  $g\in G_0$ such that $\mathsf k(A)=\mathsf k(X)=1$, $\mathsf v_{g}(A)=1$,  and $2\le\mathsf v_g(X)\le \frac{n}{2}+1$. Let $U\in \Omega_{>1}$.
\smallskip

If $\ord(g)-\mathsf v_g(U)< \mathsf v_g(X)\le \frac{n}{2}+1$, then
\[
U  A^{\ord(g)-\mathsf v_g(U)}=g^{\ord(g)}  S\,,
\]
where $S \in \mathcal B (G_0)$  and $\ord(g)-\mathsf v_g(U)\le \frac{n}{2}$. Since $\supp (S) \subsetneq G_0$, $S$ is a product of atoms from $\Omega_{=1}$.

If $\ord(g)-\mathsf v_g(U)\ge \mathsf v_g(X)$, then
\[
U X^{\lfloor \frac{\ord(g)-\mathsf v_g(U)}{\mathsf v_g(X)}\rfloor} A^{\ord(g)-\mathsf v_g(U)-\mathsf v_g(X)\cdot \lfloor \frac{\ord(g)-\mathsf v_g(U)}{\mathsf v_g(X)}\rfloor }=g^{\ord(g)}  S\,,
\]
where $S$ is a product of atoms from $\Omega_{=1}$ (because $\supp (S) \subsetneq G_0)$ and
\[
\begin{aligned}
\lfloor  & \frac{\ord(g)-\mathsf v_g(U)}{\mathsf v_g(X)}\rfloor+\ord(g)-\mathsf v_g(U)-\mathsf v_g(X)\cdot \lfloor \frac{\ord(g)-\mathsf v_g(U)}{\mathsf v_g(X)}\rfloor \\
& \le \frac{\big( \ord (g) - \mathsf v_g (U)\big) - \big(\mathsf v_g (X)-1 \big)}{\mathsf v_g (X)} + \mathsf v_g (X)-1 \\
& \le \frac{\ord (g)-\mathsf v_g (U) + 1}{2} \le \frac{n+1}{2}\,. \qquad \qquad \qquad  \qed{\text{\rm (Proof of {\bf  A1})}}
\end{aligned}
\]

\smallskip
We set
\[
\Omega_{>1}'=\{A\in \mathcal A (G_0) \mid \mathsf k (A)= \min \{ \mathsf k (B) \mid B \in \Omega_{>1}\} \}\subset \Omega_{>1} \,,
\]
and we consider all tuples $(U,A_1,\ldots,A_m)$, where $U\in \Omega_{>1}'$, $m\in \N$, and $A_1,\ldots,A_m\in \Omega_{=1}$,  such that $UA_1 \cdot \ldots \cdot A_m$ can be factorized into a product of atoms from $\Omega_{=1}$. We fix one such tuple $(U,A_1,\ldots,A_m)$ with the property that $m$ is minimal possible.
Let
\begin{equation}\label{e3}
UA_1 \cdot \ldots \cdot A_m = V_1 \cdot \ldots \cdot V_t \quad \text{ with} \quad t \in \N \quad \text{and} \quad  V_1, \ldots, V_t \in \Omega_{=1} \,.
\end{equation}
We observe that  $\mathsf k(U)=t-m$ and continue with the following assertion.

\begin{enumerate}
\item[{\bf  A2. }]  For each $\nu \in [1, t]$, we have $V_{\nu} \nmid UA_1 \cdot \ldots \cdot A_{m-1}$.
\end{enumerate}

{\it Proof of {\bf A2.}} \, Assume to the contrary that there is such a $\nu \in [1, t]$, say $\nu = 1$, with  $V_1 \t U A_1 \cdot \ldots \cdot A_{m-1}$. Then there are $l \in \N$ and $T_1, \ldots, T_l \in \mathcal A (G_0)$ such that
\[
U A_1 \cdot \ldots \cdot A_{m-1} = V_1 T_1 \cdot \ldots \cdot T_{\mathit l} \,.
\]
By the minimality of $m$,  there exists some $\nu \in [1, l]$ such that $T_{\nu} \in \Omega_{>1}$, say $\nu=1$.  Since
\[
\sum_{\nu=2}^l \mathsf k (T_{\nu}) = \mathsf k (U) + (m-1)-1 - \mathsf k (T_1) \le m-2 \le \frac{n-3}{2}\,,
\]
and $\mathsf k (T')\ge\frac{n}{2}$ for all $T' \in \Omega_{>1}$, it follows that  $T_2, \ldots, T_l \in \Omega_{=1}$, whence $l = 1 + \sum_{\nu=2}^l \mathsf k (T_{\nu}) \le m-1$.
We obtain that
\[
V_1 T_1 \cdot \ldots \cdot T_{\mathit l}A_m = U A_1 \cdot \ldots \cdot A_m = V_1 \cdot \ldots \cdot V_t \,,
\] and thus
 \[
 T_1 \cdot \ldots \cdot T_{\mathit l}A_m = V_2 \cdot \ldots \cdot V_t \,.
\]
The minimality of $m$ implies that $\mathsf k(T_1)> \mathsf k(U)$.
It follows that  \[\mathsf k(T_1)-\mathsf k(U)=m-1-{\mathit l}\le m-2\le \frac{n-3}{2}<\left\lfloor\frac{n}{2}\right\rfloor\le \mathsf k(T_1)-\mathsf k(U)\,,\]
 a contradiction.  \qed{(Proof of {\bf  A2})}

\bigskip
\medskip
With the minimal integer $m$, as fixed before {\bf A2}, we consider all the tuples $(A_1',\ldots,A_m')$, where $A_1',\ldots,A_m'\in \Omega_{=1}$, such that $UA_1'\cdot\ldots\cdot A_m'$ can be factorized into a product of atoms from $\Omega_{=1}$. We fix one such tuple $(A_1',\ldots,A_m')$ such that $|\supp(A_m')|$ is minimal. For simplicity of notation, we suppose that  $(A_1',\ldots,A_m')=(A_1,\ldots,A_m)$.

\medskip
By Equation (\ref{e3}), there are $X_1, Y_1, \ldots,  X_t, Y_t \in \mathcal F (G)$ such that
\begin{align*}
&UA_1\cdot \ldots \cdot A_{m-1}=X_1\cdot\ldots\cdot X_t, \\ &A_{m}=Y_1\cdot\ldots\cdot Y_t, \text{ and } \ V_i=X_iY_i \text{ for each } \ i\in[1,t] \,.
\end{align*}
Then {\bf A2} implies that $|Y_i| \ge 1$ for each $i \in [1,t]$, and we set
$\alpha=|\{i\in [1,t]\mid |Y_i|=1\}|$. If $\alpha\le 2m$, then
\[
n\ge |A_m|=|Y_1|+\ldots+|Y_t|\ge \alpha+2(t-\alpha)=2t-\alpha\ge 2t-2m \,,
\]
 and hence $\min \Delta (G_0)\le t-1-m\le \frac{n}{2}-1$, a contradiction. Thus
$\alpha\ge 2m+1$.
After renumbering if necessary we  assume that $1=|Y_1|=\ldots=|Y_{\alpha}|<|Y_{\alpha+1}|\le \ldots\le |Y_t|$.
Let $Y_i=y_i$ for each $i\in[1,\alpha]$ and
\begin{equation}\label{e6}
S_0=\{y_1,y_2,\ldots,y_{\alpha}\}\,.
\end{equation}
For every $i \in [1, \alpha]$, $V_i \t y_iU A_1 \cdot \ldots \cdot A_{m-1}$ whence $\mathsf v_{y_i} (V_i) \le 1 + \mathsf v_{y_i} (U A_1 \cdot \ldots \cdot A_{m-1})$ and since $V_i \nmid U A_1 \cdot \ldots \cdot A_{m-1}$, it follows that
\begin{equation} \label{e7}
\mathsf v_{y_i}(V_i)=\mathsf v_{y_i}(UA_1\cdot\ldots\cdot A_{m-1})+1 \,.
\end{equation}
Assume to the contrary that there are distinct $i,j \in [1, \alpha]$ such that $y_i=y_j$. Then
\[
\mathsf v_{y_i}(UA_1\ \cdot \ldots \cdot  A_{m-1})+1=\mathsf v_{y_i}(V_i)=\mathsf v_{y_i}(X_i)+1 =\mathsf v_{y_i}(V_j)=\mathsf v_{y_i}(X_j)+1 \,.
\]
Since $X_iX_j \t UA_1\ \cdot \ldots \cdot  A_{m-1}$, we infer that
\[
\mathsf v_{y_i}(UA_1\cdot\ldots\cdot A_{m-1}) \ge \mathsf v_{y_i}(X_iX_j)=\mathsf v_{y_i}(V_iV_j)-2 = 2\mathsf v_{y_i}(UA_1\cdot\ldots\cdot A_{m-1}) \,,
\]
which implies that  $\mathsf v_{y_i}(UA_1\ldots A_{m-1})=0$, a contradiction to  $\supp(U)=G_0$. Thus $|S_0|=\alpha$ and
\begin{equation}\label{e5}
|\supp(A_m)|\ge |S_0|=\alpha\ge 2m+1\,.
\end{equation}

\medskip

 We proceed by the following  assertion.

\begin{enumerate}
\item[{\bf  A3. }]  $|\supp(A_m)|\le r+1$.
\end{enumerate}

{\it Proof of {\bf A3.}} Assume to the contrary that $|\supp(A_m)|\ge r+2$. We fix one element $g'\in S_0$.
Let $s_0 \in \N$  be minimal such that there exists a subset $E \subsetneq \supp(A_m)\setminus\{g'\}$ such that $s_0 g'\in \langle E\rangle$. By $|\supp (A_m)| \ge  r+2$, Lemma \ref{3.3} (applied to the subset $\supp (A_m) \subset G_0$)  implies that $s_0 < \ord (g')$.
Let $E$ be a minimal subset with this property.
Thus, by Lemma \ref{3.1}.1,  there exists an atom $A'$ with $\mathsf v_{g'} (A')=s_0$ and $\supp(A')=\{g'\}\cup E\subsetneq \supp(A_m)\subset G_0$
which implies that $\mathsf k (A')=1$.

 If $s_0=1$, then we assume that $g'=y_1$. Since $\mathsf v_{y_1}(V_1)=\mathsf v_{y_1}(UA_1\cdot\ldots\cdot A_{m-1})+1$ by Equation \ref{e7} and $V_1\t UA_1\cdot\ldots\cdot A_{m-1}\cdot y_1$, we obtain that $|\supp(UA_1\cdot\ldots\cdot A_{m-1}\cdot A'(V_1)^{-1})|<|G_0|$ and hence $UA_1\cdot\ldots\cdot A_{m-1}\cdot A'$ can be factorized into a product of atoms from $\Omega_{=1}$, a contradiction to the minimality of $|\supp(A_m)|$.

Suppose  $s_0\ge 2$.  We distinguish two cases:

\medskip
\noindent
CASE 1: \, $|\supp(A')\cap S_0|\ge m+1$.
\medskip

We may suppose  that $\{y_1,\ldots,y_{m+1}\}\subset \supp(A')\cap S_0$.
Then $V_1\cdot\ldots\cdot V_{m+1}\t UA_1\cdot\ldots\cdot A_{m-1}A'$ and $\mathsf k (UA_1\cdot\ldots\cdot A_{m-1}A'(V_1\cdot\ldots\cdot V_{m+1})^{-1})<\mathsf k(U)$. By the minimality of $\mathsf k(U)$, we have that $UA_1\cdot\ldots\cdot A_{m-1}A'$ can be factorized into a product of atoms from $\Omega_{=1}$, a contradiction to the minimality of $|\supp(A_m)|$.

\medskip
\noindent
CASE 2: \, $|\supp(A')\cap S_0|\le m$.
\medskip

 Let $p$ be a prime dividing $s_0$.
 Lemma \ref{3.3} (applied to the subset $\supp (A_m) \subset G_0$) implies  that  there exists an atom $A_p'\in \mathcal A(\supp(A_m))$ such that $|\supp(A_p')|\le r+1<|\supp(A_m)|$ and $p\nmid \mathsf v_{g'}(A_p')$.

Let $d=\gcd(s_0,\mathsf v_{g'}(A_{p}')$. Then $d<s_0$ and
\[
dg'\in \langle s_0 g', \mathsf v_{g'}(A_{p}')g'\rangle\subset \langle (\supp(A')\cup \supp(A_{p}') ) \setminus \{g'\}\rangle \,.
\]

Thus by minimality of $s_0$, we have $\supp(A_m)\setminus\{g'\}= \Big(\supp(A'')\cup \supp(A_{p}') \Big)\setminus \{g'\}$.
It follows that
\begin{align*}
|\supp(A_p')\cap S_0|&\ge |S_0\setminus \supp(A')|\ge |S_0|-|\supp(A')\cap S_0|\\
&\ge 2m+1-m=m+1\,.
\end{align*}
Similar to CASE 1,  $UA_1\cdot\ldots\cdot A_{m-1}A_p'$ can be factorized into a product of atoms from $\Omega_{=1}$, a contradiction to the minimality of $|\supp(A_m)|$.
\qed{(Proof of {\bf A3})}

\bigskip
\medskip

We consider all tuples $T=(X_1, Y_1, \ldots,  X_t, Y_t)$, where  $X_1, Y_1, \ldots,  X_t, Y_t\in \mathcal F (G)$, such that
\begin{align*}
&UA_1\cdot \ldots \cdot A_{m-1}=X_1\cdot\ldots\cdot X_t, \\ &A_{m}=Y_1\cdot\ldots\cdot Y_t, \text{ and } \ V_i=X_iY_i \text{ for each } \ i\in[1,t] \,.
\end{align*}
After renumbering if necessary, we can assume that   $|Y_i|=1$ for each $i\in [1,s_1]$, $|Y_i|=2$ and $\supp(Y_i)=1$ for each $i\in [s_1+1,s_2]$, $|Y_i|=2$ and $\supp(Y_i)=2$ for each $i\in [s_2+1,s_3]$, and $|Y_i|\ge 3$ for each $i\in [s_3+1,t]$, where $s_1,s_2,s_3\in [0,t]$.
Let $F_1(T)=\supp(Y_1\cdot\ldots\cdot Y_{s_1})$, $F_2(T)=\supp(Y_{s_1+1}\cdot\ldots\cdot Y_{s_2})$,
$F_3(T)=\supp(Y_{s_2+1}\cdot\ldots\cdot Y_{s_3})$, and
$F_4(T)=\supp(Y_{s_3+1}\cdot\ldots\cdot Y_{t})$.

Now we fix one such tuple $T=(X_1, Y_1, \ldots,  X_t, Y_t)$ such that $\big(\alpha_T=|\{i\in [1,t]\mid |Y_i|=1\}|, |F_1(T)\cap F_3(T)|\big)\in (\N_0^2,+)$ is minimal with respect to  lexicographic order.

\bigskip

\begin{enumerate}
\item[{\bf  A4.}] There exits a subset $\{g_1,\ldots,g_{\ell}\}\subset \supp(A_m)$ with $\ell\le r-m$  such that \\
$UA_1 \cdot \ldots \cdot A_{m-1}g_1^{\ord{g_1}}\cdot\ldots\cdot g_{\ell}^{\ord(g_{\ell})} $ can be factorized into a product of atoms from $\Omega_{=1}$.
\end{enumerate}

\medskip
{\it Proof of {\bf A4.}}
If $F_1(T)\cap F_4(T)\neq \emptyset$, there exist $i\in [1,s_1]$ and $j\in [s_3+1,t]$ such that $Y_i\cap Y_j=\{y_i\}$, where $Y_i=\{y_i\}$. By Equation \eqref{3.4}, $\mathsf v_{y_i}(X_i)\ge 1$. Let $X_i'=X_iy_i^{-1}, Y_i'=Y_iy_i, X_j'=X_jy_i, Y_j'=Y_jy_i^{-1}$ and substitute $X_i,Y_i,X_j,Y_j$ with $X_i',Y_i',X_j',Y_j'$  in the tuple $T=(X_1, Y_1, \ldots,  X_t, Y_t)$. Thus we get a new tuple $T'$ such that $\alpha_{T'}=\alpha_T-1$, a contradiction  to  the minimality of $\alpha_T$. Thus $F_1(T)\cap F_4(T)=\emptyset$.

If $F_1(T)\cap F_3(T)\neq \emptyset$, there exist $i\in [1,s_1]$ and $j\in [s_2+1,s_3]$ such that $Y_i\cap Y_j=\{y_i\}$, where $Y_i=\{y_i\}$.  Let $Y_j=\{y_i,y_j\}$, where $y_j\neq y_i$. By Equation \eqref{3.4}, $\mathsf v_{y_i}(X_i)\ge 1$. Let $X_i'=X_iy_i^{-1}, Y_i'=Y_iy_i, X_j'=X_jy_i, Y_j'=Y_jy_i^{-1}$ and substitute $X_i,Y_i,X_j,Y_j$ with $X_i',Y_i',X_j',Y_j'$  in the tuple $T=(X_1, Y_1, \ldots,  X_t, Y_t)$. Thus we get a new tuple $T'$ such that $\alpha_{T'}=\alpha_T$, $|F_1(T')\cap F_3(T')|=|F_1(T)\cap F_3(T)|-1$, a contradiction  to  the minimality of $\big(\alpha_T=|\{i\in [1,t]\mid |Y_i|=1\}|, |F_1(T)\cap F_3(T)|\big)$. Thus $F_1(T)\cap F_3(T)=\emptyset$.

Suppose that  $|F_1(T)\cap F_2(T)|\ge m$. Then let $\{g_1,\ldots,g_m\}\subset F_1(T)\cap F_2(T)$ and $Y_i=g_i$, $Y_{s_1+i}=g_i^2$, for each $i\in [1,m]$.    Hence $$\prod_{i\in [1,m]}(V_iV_{s_1+i})\mid UA_1\cdot\ldots A_{m-1}g_1^{\ord(g_1)}\cdot\ldots\cdot g_{m}^{\ord(g_m)}\,,$$ and
$$\mathsf k\big(UA_1\cdot\ldots A_{m-1}g_1^{\ord(g_1)}\cdot\ldots\cdot g_{m}^{\ord(g_m)}(\prod_{i\in [1,m]}(V_iV_{s_1+i}))^{-1}\big)= \mathsf k(U)-1\,.$$
It follows by the minimality of $\mathsf k(U)$ that $ UA_1\cdot\ldots A_{m-1}g_1^{\ord(g_1)}\cdot\ldots\cdot g_{m}^{\ord(g_m)}$ can be factorized into a product of atoms from $\Omega_{=1}$. Note that $r+1\ge |\supp(A_m)|\ge 2m+1$ by {\bf A3} and Equation \eqref{3.5}. We have that $\ell=m\le r-m $.

Suppose that $|F_1(T)\cap F_2(T)|\le  m-1$. Then $|F_1(T)\setminus F_2(T)|\ge m+2$. Since $F_1(T)\cap F_4(T)=\emptyset$ and $F_1(T)\cap F_3(T)=\emptyset$, we  let $\{g_1,\ldots, g_{m+2}\}\subset F_1(T)\setminus (F_2(T)\cup F_3(T)\cup F_4(T))$ and $\supp(A_m)\setminus \{g_1,\ldots,g_{m+2}\}=\{h_1,\ldots, h_\ell\}$, where $\ell\le r-1-m$. We assume that $Y_i=g_i$ for each $i\in [1,m+2]$.  Therefore $$\prod_{i\in [m+3,t]}V_i\mid UA_1\cdot\ldots A_{m-1}h_1^{\ord(h_1)}\cdot\ldots\cdot h_{\ell}^{\ord(h_{\ell})}\,,$$ and
$$\mathsf k\big(UA_1\cdot\ldots A_{m-1}h_1^{\ord(h_1)}\cdot\ldots\cdot h_{\ell}^{\ord(h_{\ell})}(\prod_{i\in [m+3,t]}V_i)^{-1}\big)=\mathsf k(U)+m-1+\ell-(t-m-2)\le r\le \mathsf k(U)-1\,.$$ It follows by the minimality of $\mathsf k(U)$ that $ UA_1\cdot\ldots A_{m-1}g_1^{\ord(g_1)}\cdot\ldots\cdot g_{m}^{\ord(g_m)}$ can be factorized into a product of atoms from $\Omega_{=1}$.
\qed{(Proof of {\bf A4})}

\bigskip
By {\bf A4}, we consider all  $I\in [1,m-1]$ and $J\in [1,\ell]$ such that $U\prod_{i\in I}A_i\prod_{j\in J}g_j^{\ord(g_j)}$ can be factorized into a product of atoms from $\Omega_{=1}$. We fix such $I$ and $J$ with $|I|+|J|$ is minimal. Then $|I|+|J|\le m-1+\ell\le r-1$. Since $J\neq \emptyset$, we choose $j_0\in J$ and hence $U\prod_{i\in I}A_i\prod_{j\in J\setminus\{j_0\}}g_j^{\ord(g_j)}$ can not be factorized into a product of atoms from $\Omega_{=1}$ by the minimality of $|I|+|J|$.

Now we consider all tuples $(U',A_1',\ldots, A_{m'-1}', g )$, where $U'\in \Omega_{> 1}'$, $m'\in\N$, $A_1',\ldots,A_{m'-1}'\in \Omega_{=1}$, and $g\in G_0$ such that $U'A_1'\cdot\ldots A_{m'-1}'g^{\ord(g)}$  can be factorized into a product of atoms from $\Omega_{=1}$ and $U'A_1'\cdot\ldots A_{m'-1}'$  can not be factorized into a product of atoms from $\Omega_{=1}$. We fix one such tuple $(U',A_1',\ldots, A_{m'-1}', g )$ with $m'$ is minimal. Thus $m'\le |I|+|J|\le r-1$.
Let $$U'A_1'\cdot\ldots A_{m'-1}'g^{\ord(g)}=W_1\cdot\ldots\cdot W_{t'}\,,\text{where }W_1,\ldots,W_{t'}\in \Omega_{=1}\,,$$ and we claim that

\begin{enumerate}
\item[{\bf  A5. }]  For each $\nu \in [1, t']$, we have $W_{\nu} \nmid U'A_1' \cdot \ldots \cdot A_{m'-1}'$.
\end{enumerate}

{\it Proof of {\bf A5.}} \, Assume to the contrary that there is such a $\nu \in [1, t']$, say $\nu = 1$, with  $W_1 \t U' A_1' \cdot \ldots \cdot A_{m'-1}'$. Then there are $l \in \N$ and $T_1, \ldots, T_l \in \mathcal A (G_0)$ such that
\[
U' A_1' \cdot \ldots \cdot A_{m'-1}' = W_1 T_1 \cdot \ldots \cdot T_{\mathit l} \,.
\]
Since $U'A_1'\cdot\ldots A_{m'-1}'$  can not be factorized into a product of atoms from $\Omega_{=1}$,  there exists some $\nu \in [1, l]$ such that $T_{\nu} \in \Omega_{>1}$, say $\nu=1$, and $T_1 \cdot \ldots \cdot T_{\mathit l}$ can not be factorized into a product of atoms from $\Omega_{=1}$.  Since
\[
\sum_{\nu=2}^l \mathsf k (T_{\nu}) = \mathsf k (U') + (m'-1)-1 - \mathsf k (T_1) \le m'-2 \le r-3\,,
\]
and $\mathsf k (T')\ge r+1$ for all $T' \in \Omega_{>1}$, it follows that  $T_2, \ldots, T_l \in \Omega_{=1}$, whence $l = 1 + \sum_{\nu=2}^l \mathsf k (T_{\nu}) \le m'-1$.
We obtain that
\[
W_1 T_1 \cdot \ldots \cdot T_{\mathit l}g^{\ord(g)} = U' A_1' \cdot \ldots \cdot A_{m'-1}g^{\ord(g)} = W_1 \cdot \ldots \cdot W_{t'} \,,
\] and thus
 \[
 T_1 \cdot \ldots \cdot T_{\mathit l}g^{\ord(g)} = W_2 \cdot \ldots \cdot W_{t'} \,.
\]
Since $T_1 \cdot \ldots \cdot T_{\mathit l}$ can not be factorized into a product of atoms from $\Omega_{=1}$, we obtain that  $\mathsf k(T_1)> \mathsf k(U)$ by  the minimality of $m'$.
It follows that  \[\mathsf k(T_1)-\mathsf k(U')=m'-1-{\mathit l}\le m'-2\le r-3<r\le \mathsf k(T_1)-\mathsf k(U)\,,\]
 a contradiction.  \qed{(Proof of {\bf  A5})}

\bigskip
Let $U'A_1'\cdot\ldots\cdot A_{m'-1}'=X_1'\cdot\ldots\cdot X_{t'}'$ and $g^{\ord(g)}=g^{y_1}\cdot\ldots\cdot g^{y_{t'}}$ such that $W_i=X_i'g^{y_i}$ for each $i\in [1,t']$. By {\bf A5}, we obtain that $y_i\ge 1$ for all $i\in [1,t']$. If $|\{i\in [1,t']\t y_i=1\}|\ge 2$, say $y_1=y_2=1$, then $\mathsf v_g(W_1)=\mathsf v_g(W_2)=1+\mathsf v_g(U'A_1'\cdot\ldots\cdot A_{m'-1}')$ by {\bf A5} and hence $\mathsf v_g(X_1X_2)=\mathsf v_g(W_1)+\mathsf v_g(W_2)-2=2\mathsf v_g(U'A_1'\cdot\ldots\cdot A_{m'-1}')\ge \mathsf v_g(U'A_1'\cdot\ldots\cdot A_{m'-1}')+\mathsf v_g(X_1X_2)$, a contradiction. Thus $|\{i\in [1,t']\t y_i=1\}|\le 1 $ and hence $1+2(t'-1)\le \ord(g)\le n$. It follows that
$$\mathsf k(U')=t'-m'\le \frac{n+1}{2}-1\le \left\lfloor\frac{n}{2}\right\rfloor\,,$$
a contradiction.
\end{proof}

\medskip
\begin{proposition} \label{3.7}
We have $\mathsf m(G) \le \max\{r-1, \left\lfloor\frac{n}{2}\right\rfloor-1 \}$.
\end{proposition}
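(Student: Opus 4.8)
The plan is to reduce the bound to minimal non-half-factorial LCN-sets that carry a full generation property, and then to run a short case analysis routing each configuration to Lemma~\ref{3.4} or Lemma~\ref{3.6} (with the small case handled directly by Lemma~\ref{3.2}).

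First I would argue that $\mathsf{m}(G)$ is attained on minimal non-half-factorial LCN-sets. Indeed, if $G_0$ is an LCN-set with $\Delta(G_0)\neq\emptyset$ and $G_0'\subseteq G_0$ is minimal non-half-factorial, then $\mathcal B(G_0')$ is a divisor-closed submonoid of $\mathcal B(G_0)$, so $\mathcal A(G_0')\subseteq\mathcal A(G_0)$ and hence $G_0'$ is again an LCN-set, while $\Delta(G_0')\subseteq\Delta(G_0)$ forces $\min\Delta(G_0)\le\min\Delta(G_0')$. Thus it suffices to bound $\min\Delta(G_0)$ for a minimal non-half-factorial LCN-set $G_0$. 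Next I would invoke Lemma~\ref{3.1}.3 to replace $G_0$ by a set $G_0^*$ of the same cardinality: the transfer homomorphism $\mathcal B(G_0)\to\mathcal B(G_0^*)$ preserves $\Delta$ and, by property (b) there, cross numbers, so $G_0^*$ is still a minimal non-half-factorial LCN-set, and in addition $h\in\langle G_0^*\setminus\{h\}\rangle$ for every $h\in G_0^*$. So from that point on I would assume $G_0$ has all of these properties.

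Then I would carry out the case distinction. If $|G_0|\le r+1$, then Lemma~\ref{3.2}.3.(b) gives $\min\Delta(G_0)\le|G_0|-2\le r-1$, and we are done. So assume $|G_0|\ge r+2$. \textbf{Case 1:} there exist distinct $h,h'\in G_0$ with $h\in\langle G_0\setminus\{h,h'\}\rangle$. Put $G_2=G_0\setminus\{h,h'\}$; then $h\in\langle G_2\rangle$, and since $h'\in\langle G_0\setminus\{h'\}\rangle=\langle G_2\cup\{h\}\rangle=\langle G_2\rangle$ we also get $h'\in\langle G_2\rangle$, so $\langle G_2\rangle=\langle G_0\rangle$ with $|G_2|=|G_0|-2$; now Lemma~\ref{3.6}(b) applies. \textbf{Case 2:} for all distinct $h,h'\in G_0$ we have $h\notin\langle G_0\setminus\{h,h'\}\rangle$. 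Here I would split further: if some atom $A$ with $\supp(A)=G_0$ satisfies $\mathsf{k}(A)=1$, then Lemma~\ref{3.6}(a) applies; otherwise every atom with full support has cross number $>1$ (including the vacuous possibility that there is no full-support atom), and Lemma~\ref{3.4} applies, yielding even $\min\Delta(G_0)\le\lfloor\frac{n}{2}\rfloor-1$. In every branch $\min\Delta(G_0)\le\max\{r-1,\lfloor\frac{n}{2}\rfloor-1\}$, which is the assertion.

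Essentially all the content sits in Lemmas~\ref{3.4} and~\ref{3.6}, which are already available, so I do not expect a serious obstacle; the only genuine care needed is to check that the reduction to minimal non-half-factorial LCN-sets with the generation property is legitimate and preserves every hypothesis those lemmas demand, and that the branches above are exhaustive with hypotheses matching the quoted lemmas exactly — in particular that in Case 1 the set $G_2$ really generates $\langle G_0\rangle$, which is where the normalization $h'\in\langle G_0\setminus\{h'\}\rangle$ gets used.
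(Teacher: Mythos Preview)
Your proposal is correct and follows essentially the same route as the paper. The paper also reduces to minimal non-half-factorial LCN-sets (using the divisibility $\min\Delta(G_0)=\gcd\Delta(G_0)\mid\gcd\Delta(G_1)=\min\Delta(G_1)$ rather than your inclusion $\Delta(G_0')\subset\Delta(G_0)$, but both are valid), normalizes via Lemma~\ref{3.1}.3, handles $|G_0|\le r+1$ by Lemma~\ref{3.2}.3.(b), and then splits the remaining case exactly as you do: your Case~1 is equivalent to the paper's ``there exists $G_2\subset G_0$ with $\langle G_2\rangle=\langle G_0\rangle$ and $|G_2|\le|G_0|-2$'', and in Case~2 the paper simply writes ``Lemma~\ref{3.4} and Lemma~\ref{3.6} imply the bound'' where you spell out the further split on whether a full-support atom with cross number~$1$ exists.
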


\begin{proof}
Let $G_0 \subset G$ be a non-half-factorial LCN set. We have to prove that
\[
\min \Delta(G_0)\le \max\{r-1, \left\lfloor\frac{n}{2}\right\rfloor-1 \}\,.
\]
If $G_1 \subset G_0$ is non-half-factorial, then $\min \Delta (G_0) = \gcd \Delta (G_0) \t \gcd \Delta (G_1) = \min \Delta (G_1)$. Thus we may suppose that $G_0$ is minimal non-half-factorial. By Lemma \ref{3.1}.3.(a), we may suppose that $g \in \langle G_0 \setminus \{g\} \rangle$ for all $g \in G_0$.

If $|G_0| \le r+1$, then $\min \Delta (G_0) \le |G_0|-2\le r-1$ by Lemma \ref{3.2}.3. Thus we may suppose that  $|G_0| \ge r+2$ and we distinguish two cases.

\smallskip
\noindent
CASE 1: \, There exists a subset $G_2 \subset G_0$ such that $\langle G_2 \rangle = \langle G_0 \rangle$ and $|G_2| \le |G_0|-2$.

Then Lemma \ref{3.6} implies that $\min \Delta (G_0)\le \max\{r-1, \left\lfloor\frac{n}{2}\right\rfloor-1 \}$.

\smallskip
\noindent
CASE 2: \, Every subset $G_1\subset G_0$ with $|G_1|=|G_0|-1$ is a minimal generating set of $\langle G_0 \rangle$.

Then for each $h \in G_0$, $G_0\setminus \{h\}$ is half-factorial and $h \notin \langle G_0 \setminus \{h, h' \} \rangle$ for any $ h'\in G_0\setminus\{h\}$. It follows that Lemma \ref{3.4} and Lemma \ref{3.6} imply that $\min \Delta(G_0)\le \max\{r-1, \left\lfloor\frac{n}{2}\right\rfloor-1 \}$.
\end{proof}

\medskip
\section{Proofs of the main theorems}\label{4}
\medskip

In this section we give the proofs of Theorems \ref{1.1} and \ref{1.2}.

\medskip
\begin{proof}[Proof of Theorem \ref{1.1}]
Let $H$ be a Krull monoid with finite class group $G$ where $|G| \ge 3$ and   every class contains a prime divisor.
We set $\exp (G)=n$, $\mathsf r (G)=r$, and let $k\in \N$ be maximal such that $G$ has a subgroup isomorphic to $C_{n}^k$.
By Lemma \ref{2.1}, it suffices to prove the assertions for the Krull monoid $\mathcal B (G)$.

Propositions \ref{2.3}.3  and \ref{3.7} immediately imply the required inclusions for $\Delta^* (G)$, namely that
 \begin{equation}\label{4.1}
 \begin{aligned}&[1,r-1]\cup\{\max\{1,\lfloor\frac{n}{2}\rfloor-1\}\}\cup [\max\{1,n-k-1\},n-2]\\
 \subset \Delta^*(G)\subset &[1,\max\{r-1, \lfloor\frac{n}{2}\rfloor-1 \}]\cup [\max\{1,n-k-1\},n-2]\,.
 \end{aligned}
 \end{equation}
It remains to verify the in particular statements.

\smallskip
1. If  $r\ge \left\lfloor\frac{n}{2}\right\rfloor-1$, then  $[1,\max\{r-1, \lfloor\frac{n}{2}\rfloor-1 \}]\subset [1,r-1]\cup\{\max\{1,\lfloor\frac{n}{2}\rfloor-1\}\}$. Therefore  $ \Delta^*(G)=[1,\max\{r-1, \lfloor\frac{n}{2}\rfloor-1 \}]\cup [\max\{1,n-k-1\},n-2]\,$ by Equation \eqref{4.1}.

\smallskip
2. (a) $\Rightarrow$ (b) \ Suppose that $\Delta^*(G)$ is an interval. Since $\max\{1,n-k-2\}\le \max\{r-1,n-2\}=\max\Delta^*(G)$, we obtain that $\max\{1,n-k-2\}\in \Delta^*(G)$.

   (b) $\Rightarrow$ (c) \
    Suppose that $\max\{1,n-k-2\}\in \Delta^*(G)$. If $n-k-2\le 0$, then  $n-k-2\le \max\{r-1,\lfloor\frac{n}{2}\rfloor-1\}$. If $n-k-2\ge 1$, then $n-k-2\in \Delta^*(G)\subset [1,\max\{r-1, \lfloor\frac{n}{2}\rfloor-1 \}]\cup [n-k-1,n-2]$ by Equation \ref{4.1}. Therefore $n-k-2\le \max\{r-1, \lfloor\frac{n}{2}\rfloor-1\} $.

    (c) $\Rightarrow$ (d) \
     Suppose that $n-k-2\le \max\{r-1, \lfloor\frac{n}{2}\rfloor-1 \}$. Therefore $n-k-2\le r-1$ or $r\le n-k-2\le \lfloor\frac{n}{2}\rfloor-1$. If $n-k-2\le r-1$, then $r+k\ge n-1$. If $r\le n-k-2\le \lfloor\frac{n}{2}\rfloor-1$, then $n-r-2\le n-k-2\le \lfloor\frac{n}{2}\rfloor-1\le \frac{n}{2}-1$ and $r\le  \lfloor\frac{n}{2}\rfloor-1\le \frac{n}{2}-1$. It follows that $n-2=n-r-2+r\le \frac{n}{2}-1+\frac{n}{2}-1=n-2$ which implies that
   $n-r-2=n-k-2=\frac{n}{2}-1$ and $r=\frac{n}{2}-1$. Therefore $r=k$, $n=2r+2$, and hence $G\cong C_{2r+2}^r$.

   (d) $\Rightarrow$ (a) \
   If $r+k=n-2$ and $G\cong C_{2r+2}^{r}$, then $\Delta^*(G)=[1,2r]$ is an interval by 1.
   If  $r+k\ge n-1$,  then $r\ge \left\lfloor\frac{n}{2}\right\rfloor$ and hence $\Delta^*(G)=[1,r-1]\cup [\max\{1,n-k-1\},n-2]$ is an interval by 1.
   \end{proof}

\medskip

\begin{proof}[Proof of Theorem \ref{1.2}]
 Let $G$ and $G'$ be finite abelian groups with $\exp(G)=n$ and  $\mathsf r (G)=r$. Let $k, k' \in \N$ be maximal such that $G$ has a subgroup isomorphic to $C_n^k$ and $G'$ has a subgroup isomorphic to $C_{\exp(G')}^{k'}$. Suppose that
\[
r+k \le n-2, \quad  G \not\cong C_{2r+2}^r, \quad \text{ and  that} \quad  \mathcal L (G) = \mathcal L (G') \,.
\]

By our assumption and Theorem \ref{1.1}.2, we have that $\Delta^*(G)$ is not an interval, $n-k-2\not\in \Delta^*(G)$, and  $n-k-2\ge \max\{r,\left\lfloor\frac{n}{2}\right\rfloor\} $. By Proposition \ref{2.3}, we obtain that $\max\Delta_1(G)=\max\Delta^*(G)=\max\{r-1,n-2\}=n-2$, $n-k-2\not\in \Delta_1(G)$, and $n-k-1\in \Delta_1(G)$. Note that $\mathsf D(G)=\mathsf D(G')$ and $\Delta_1(G)=\Delta_1(G')$ (see \cite[Proposition 7.3.1]{Ge-HK06a}).
Then $\max \Delta_1(G')=\max\{\mathsf r(G')-1, \exp(G')-2\}=\max\Delta_1(G)=n-2$, $n-k-2\not\in \Delta_1(G')$, $n-k-1\in \Delta_1(G')$. If $\mathsf r(G')\ge \exp(G')-1$, then $\Delta_1(G')=[1,\mathsf r(G')-1]$ by Proposition \ref{2.3}, a contradiction.  It follows that $\exp(G')=n$ by $\max\Delta_1(G')=\exp(G')-2$. Suppose that $k'\ge k+1$. Then $n-k-2\in [n-k'-1, n-2]\subset \Delta_1(G')=\Delta_1(G)$, a contradiction. Suppose that $k'\le k-1$. Then $n-k-1\not\in [n-k'-1, n-2]$ and hence $n-k-1\in [1,\max\{\mathsf r(G')-1, \left\lfloor\frac{n}{2}\right\rfloor-1\}]$. If $n-k-1\le \mathsf r(G')-1$, then $n-k-2\in [1,\mathsf r(G')-1]\subset \Delta_1(G')=\Delta_1(G)$, a contradiction. Otherwise $ n-k-1\le \left\lfloor\frac{n}{2}\right\rfloor-1$, a contradiction to $n-k-2\ge \left\lfloor\frac{n}{2}\right\rfloor $. It follows that $k=k'$.

In particular, if $ r\ge \left\lfloor\frac{n}{2}\right\rfloor+1$, then $[1,r-1]\cup [n-k-1,n-2]=\Delta_1(G)=\Delta_1(G')$ and hence
$[1,\mathsf r(G')]\subset[1,r-1]\subset [1,\max\{\mathsf r(G')-1,\left\lfloor\frac{n}{2}\right\rfloor-1\}]$. Therefore by $ r\ge \left\lfloor\frac{n}{2}\right\rfloor+1$ we obtain that $\mathsf r(G')=r$.

If $\mathsf r(G)=k$, then $G\cong C_n^r$ is a subgroup of $G'$. Thus $\mathsf D(G)=\mathsf D(G')$ implies that $G\cong G'$.
\end{proof}

\providecommand{\bysame}{\leavevmode\hbox to3em{\hrulefill}\thinspace}
\providecommand{\MR}{\relax\ifhmode\unskip\space\fi MR }
\providecommand{\MRhref}[2]{%
  \href{http://www.ams.org/mathscinet-getitem?mr=#1}{#2}
}
\providecommand{\href}[2]{#2}

\end{document}